\def\phi{{\varphi}}
\newcommand{\CO}[2]{ \left\langle #1 , #2 \right\rangle}
\DeclareSymbolFont{AMSb}{U}{msb}{m}{n}
\DeclareMathSymbol{\N}{\mathbin}{AMSb}{"4E}
\DeclareMathSymbol{\Z}{\mathbin}{AMSb}{"5A}
\DeclareMathSymbol{\R}{\mathbin}{AMSb}{"52}
\DeclareMathSymbol{\Q}{\mathbin}{AMSb}{"51}
\DeclareMathSymbol{\I}{\mathbin}{AMSb}{"49}
\DeclareMathSymbol{\C}{\mathbin}{AMSb}{"43}
\def\be{\begin{equation}}
\def\ee{\end{equation}}
\def\ber{\begin{eqnarray}}
\def\eer{\end{eqnarray}}
\def\beq{\begin{equation}}
\def\eeq{\end{equation}}
\def\Z{{\mathbb{Z}}}
\def\IR{{\mathbb{R}}}
\newcommand{\E}[0]{ \varepsilon}
\newcommand{ \pOm}{\partial \Omega}
\begin{document}

\addtolength{\textheight}{0 cm} \addtolength{\hoffset}{0 cm}
\addtolength{\textwidth}{0 cm} \addtolength{\voffset}{0 cm}

\newenvironment{acknowledgement}{\noindent\textbf{Acknowledgement.}\em}{}

\setcounter{secnumdepth}{5}

 \newtheorem{proposition}{Proposition}[section]
\newtheorem{theorem}{Theorem}[section]
\newtheorem{lemma}[theorem]{Lemma}
\newtheorem{coro}[theorem]{Corollary}
\newtheorem{remark}[theorem]{Remark}
\newtheorem{extt}[theorem]{Example}
\newtheorem{claim}[theorem]{Claim}
\newtheorem{conj}[theorem]{Conjecture}
\newtheorem{definition}[theorem]{Definition}
\newtheorem{application}{Application}
\newtheorem{exam}{Example}[section]
\newtheorem{thm}{Theorem}[section]
\newtheorem{prop}{Proposition}[section]

\newtheorem*{assumption}{Assumptions on $a(x)$}

\newtheorem*{thm*}{Theorem A}

\newtheorem{corollary}[theorem]{Corollary}

\title{Supercritical elliptic  problems on nonradial  domains via a  nonsmooth variational approach\footnote{Both authors  are pleased to acknowledge the support of the  National Sciences and Engineering Research Council of Canada.}}
\author{Craig Cowan\footnote{University of Manitoba, Winnipeg Manitoba, Canada, craig.cowan@umanitoba.ca} \quad  Abbas Moameni \footnote{School of Mathematics and Statistics,
Carleton University,
Ottawa, Ontario, Canada,
momeni@math.carleton.ca} }

\date{}

\maketitle

\vspace{3mm}

\begin{abstract}  In this paper we are interested in positive classical solutions of 
\begin{equation} \label{eqx}
\left\{\begin{array}{ll}
-\Delta u = a(x) u^{p-1} &  \mbox{ in } \Omega, \\
u>0    &   \mbox{ in } \Omega, \\
u= 0 &   \mbox{ on } \pOm,
\end {array}\right.
\end{equation}
where $\Omega$ is a bounded annular domain (not necessarily an annulus) in  $\IR^N$ $(N \ge3)$ 
  and  $ a(x)$ is a nonnegative continuous function.
We show the existence of a classical positive solution for a range of supercritical values of $p$ when the problem enjoys certain mild  symmetry and monotonicity conditions. 
As a consequence of our results,  we shall show that (\ref{eqx}) has $\Bigl\lfloor\frac{N}{2} \Bigr\rfloor$ (the  floor of $\frac{N}{2}$)   positive nonradial solutions when $ a(x)=1$ and  $\Omega$ is an annulus with certain assumptions on the radii.  We also obtain  the existence of positive solutions  in the case of toroidal domains.
Our approach is  based on a new  variational principle that allows one to deal with supercritical problems variationally by limiting the corresponding functional on a proper convex subset instead of the whole space at the expense of a mild invariance property.

\end{abstract}

\noindent
{\it \footnotesize 2010 Mathematics Subject Classification: 35J15, 35A15, 35A16, 35B07  {\scriptsize }  }	 \\
{\it \footnotesize Key words:    Supercritical elliptic equations, Variational and topological methods. } {\scriptsize }

\section{Introduction}


  In this paper we are interested in positive classical solutions of 
\begin{equation} \label{eq}
\left\{\begin{array}{ll}
-\Delta u = a(x) u^{p-1} &  \mbox{ in } \Omega, \\
u>0    &   \mbox{ in } \Omega, \\
u= 0 &   \mbox{ on } \pOm,
\end {array}\right.
\end{equation}
where $\Omega$ is a bounded domain in $ \IR^N$ $(N \ge 3)$ and where $ p>1$.  We will be interested in both questions of existence and multiplicity of positive solutions.    The domains will have certain symmetry that allows us to consider supercritical values of $p$.  One main class of domains will be annular domains (not necessarily an annulus).   The function  $ a(x)$ is a nonnegative function with some symmetry assumptions, for instance, one  can take $a(x)=1.$

 For $N \ge 3$ and $a(x)=1$  the critical  exponent  
 $2^*:=\frac{2N}{N-2}$ plays a crucial role and for $ 2<p<2^*$ a variational approach shows the existence of a smooth positive solution of (\ref{eq}); a crucial step here is that $H_0^1(\Omega)$ is compactly imbedded in $L^{p}(\Omega)$, see for instance the book  \cite{STRUWE}. 
    For $ p \ge 2^*$ the well known Pohozaev identity \cite{poho} shows there are no positive solutions of (\ref{eq}) provided $ \Omega$ is star shaped.    For general domains in the critical/supercritical case, $ p \ge 2^*$, the existence versus nonexistence of positive solutions of (\ref{eq}) is an overly   challenging question. 
    
    In  the case of a radial annulus, $ \Omega=A:=\{x \in \IR^N: 0<R_1<|x|<R_2 < \infty \}$,  there is a positive radial solution, for any $p>2$, which relies on  the compact imbedding of $H^1_{0,rad}(A)$ (this is the space of $H_0^1(A)$ radial functions) into $L^{p}(A)$.  For domains that are slight perturbations of $A$ one can try and linearize around the radial solution.   In the purely critical problem $ p = 2^*$ it was shown in \cite{Bahri-Coron} that there is a solution provided the domain is not contractible.    However,   finding positive non-radial solutions for the supercrital case has been a very delicate issue as well; for instance we refer to the recent paper \cite{Weth_annulus}.  Before  providing a more comprehensive background on this problem, we shall begin with our contribution on the subject in this paper.

\subsection{Domains of double revolution}    Unless explicity stated we are always assuming our domains will be domains of double revolution.  Consider writing $ \IR^N=\IR^{m} \times \IR^{n} $ where $ m,n \ge 1$ and $m+n=N$.
We define the variables $s$ and $t$ by
\[ s:= \left\{ x_1^2 + \cdot \cdot \cdot  + x_{m}^2\right\}^\frac{1}{2}, \qquad t:=\left\{x_{m+1}^2 + \cdot \cdot \cdot + x_N^2 \right\}^\frac{1}{2}.  \] We say that $\Omega \subset \IR^N$ is a \emph{domain of double revolution} if it is invariant under rotations of the first $m$ variables and also under rotations of the last $n$ variables.  Equivalently, $\Omega$ is of the form $ \Omega=\{ x \in \IR^N: (s,t) \in U \}$ where $U$ is a domain in $ \IR^2$ symmetric with respect to the two coordinate axes.  In fact,
\[ U= \big \{ (s,t) \in \IR^2:  x=(x_1=s, x_2=0, ... , x_m=0,  x_{m+1}=t, ... , x_N =0 ) \in \Omega \big \},\] is the intersection of $\Omega$ with the $(x_1,x_{m+1})$ plane.  Note that $U$ is smooth if and only if $\Omega$ is smooth. 
We denote $\widehat{\Omega}$ to  be the intersection of $U$ with the first quadrant of $\IR^2$, that is,
\begin{equation}\label{omegahat}\widehat{\Omega}=\big\{(s,t) \in U: \,\, s> 0, \,\,  t>0 \big\}.\end{equation}
Using polar coordinates we can write $ s= r \cos(\theta),$ $ t = r \sin(\theta)$ where $ r=|x|= |(s,t)|$ and $ \theta$ is the usual polar angle in the $ (s,t)$ plane. \\ 
\begin{figure}[htbp]
\centerline{\includegraphics[width=3in, height=3in]{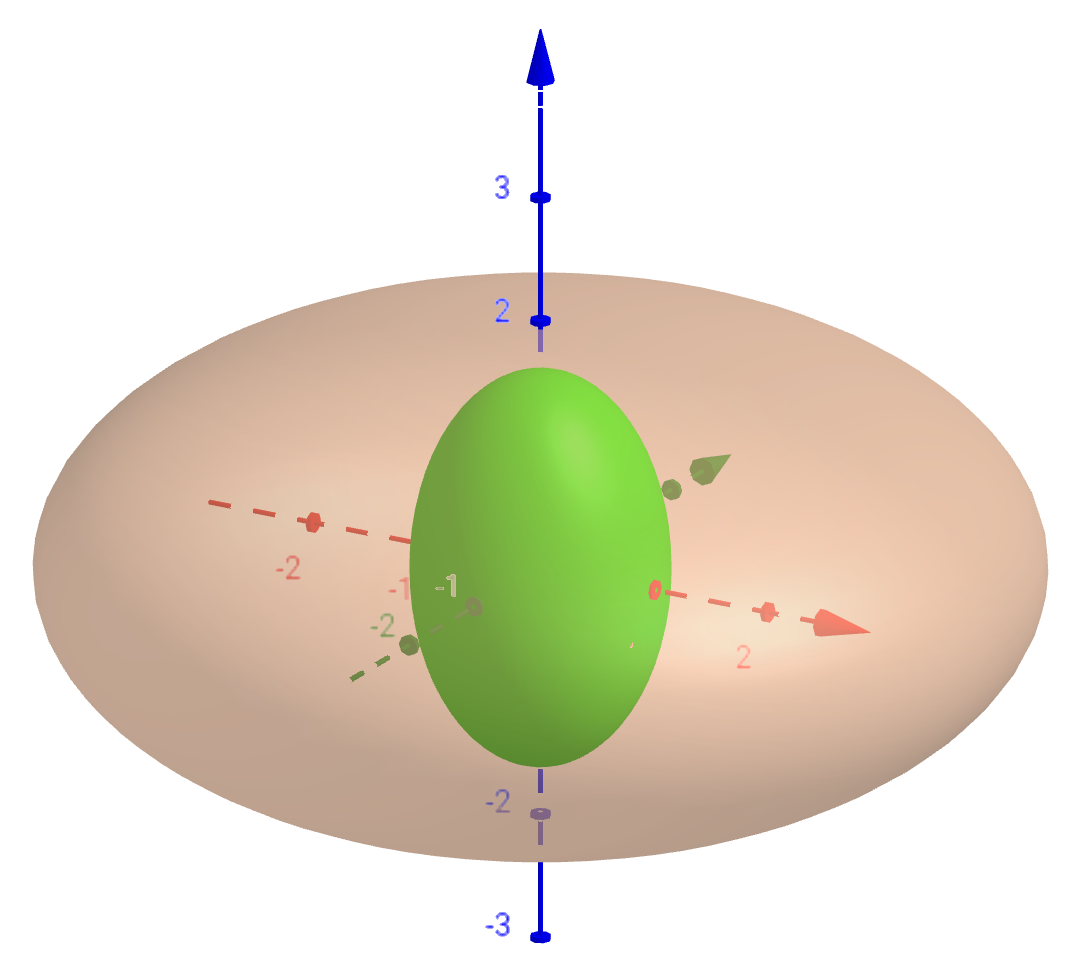}}
\caption{A solid bounded between the ellipsoids forms an annular domain with monotonicity.}
\label{fig}
\end{figure}
The domains under the consideration  will be annular and toroidal domains with a certain monotonicity (or convexity) assumption in the a polar angle.  All domains will be bounded domains in $ \IR^N$ with smooth boundary unless otherwise stated.   To describe the domains in terms of the above polar coordinates we will write \begin{equation}\label{omegahat+} \widetilde{\Omega}:=\big \{ (\theta,r): (s,t) \in \widehat{\Omega} \big \}.  
\end{equation}

We shall now discuss two important class of domains of double revolution that we are considering in this paper, namely,  annular domains and toroidal domains.\\

\noindent
\textbf{Annular domains.}  We begin by considering an explicit annular domain in $ \IR^N$ and then we will generalize. The first example would be an annulus centred at the origin with inner radius $ R_1$ and outer radius $ R_2$; $\Omega=\{x \in \IR^N:  R_1<|x|<R_2\}$.   Then we have $U=\{(s,t):  R_1^2 <s^2+t^2 <R_2^2\}$ and finally we have $ \widetilde{\Omega}=\{( \theta, r):   g_1(\theta)<r<g_2(\theta), \theta \in \left( 0 , \frac{\pi}{2} \right)\}$ where $ g_1(\theta)=R_1$ and $ g_2(\theta)=R_2$.    

We can now consider a  slightly more general version where the inner and outer boundaries are replaced with ellipsoids instead of balls.   Take $ \Omega$ to have outer boundary given by  the ellipsoid 
\[ \sum_{k=1}^{m}  \frac{x_k^2}{a^2}+ \sum_{k=m+1}^{N}  \frac{x_k^2}{b^2}  =1,\] and the inner boundary given by  \[\sum_{k=1}^{m}   \frac{x_k^2}{c^2}+ \sum_{k=m+1}^{N}  \frac{x_k^2}{d^2}  =1,\] where $ a,b,c,d>0$ are chosen such that the resulting domain is an annular region. 
Note in this case we have 
\[ \widehat{\Omega}=\left\{ (s,t): s,t>0, \; \frac{s^2}{a^2}+ \frac{t^2}{b^2}<1 \mbox{ and } \frac{s^2}{c^2}+ \frac{t^2}{d^2}>1 \right\},\]
and \begin{equation*} 
\widetilde{\Omega}=\left\{ (\theta,r): g_1(\theta)< r < g_2(\theta), \theta \in \left(0, \frac{\pi}{2}\right) \right\}, 
\end{equation*}
where the functions $g_1$  and $g_2$ are given by 
\[ g_2(\theta) = \frac{1}{ \left( \frac{1}{b^2} + \sin^2(\theta) \left( \frac{1}{a^2} - \frac{1}{b^2} \right) \right)^\frac{1}{2} } \mbox{ and }  g_1(\theta) = \frac{1}{ \left( \frac{1}{d^2} + \sin^2(\theta) \left( \frac{1}{c^2} - \frac{1}{d^2} \right) \right)^\frac{1}{2} }.\]   From this example we now introduce the idea of an \emph{annular domain with monotonicity}.     Consider the annular region in the $(s,t)$ variables if we make the further restriction   $ 0 <d \le c <a \le b$;  note we can consider this region as being obtained by starting with two concentric spheres in the $(s,t)$ plane and vertically compressing the outer sphere and vertically stretching the inner one and then $ U$ is the region between the two deformed spheres.   In terms of $g_i$ note that $ g_1$ is increasing on $(0, \frac{\pi}{2}) $ and $g_2$ is decreasing on $ (0, \frac{\pi}{2})$. \\

\begin{definition}  We refer to a domain of double revolution in $\R^{N}$ with $N=m+n$ an annular domain if its  associated domain $\widehat \Omega$ in the $(s,t)$ plane in $\R^2$ is of  the form  

\begin{equation} 
\widetilde{\Omega}=\left\{ (\theta,r): g_1(\theta)< r < g_2(\theta), \theta \in \left(0, \frac{\pi}{2}\right) \right\} 
\end{equation} 
 in polar coordinates.  Here  $ g_i>0$ is smooth on $ [0, \frac{\pi}{2}]$ with $ g_i'(0)=g_i'( \frac{\pi}{2})=0$ and $ g_2(\theta)> g_1(\theta)$ on $ [0, \frac{\pi}{2}]$.   We also call  $\Omega$ an  \emph{annular domain with monotonicity} if  $ g_1$ is increasing and $ g_2$ is decreasing on $ (0, \frac{\pi}{2})$. 
 
\end{definition}  
 See Figure 1 for  an example of an annular domain with monotonicity in $ \IR^3$, see Figure 2 for its representation $\widehat \Omega$ in the $(s,t)$ plane.  We also refer to Figure 3 for the representation $\widetilde \Omega$ of $\widehat \Omega$ in polar coordinates.

\begin{figure}[tbp]
  \centering
 \hspace{-1.5 cm}  \begin{minipage}[b]{0.4\textwidth}
    \includegraphics[width=3in, height=3in]{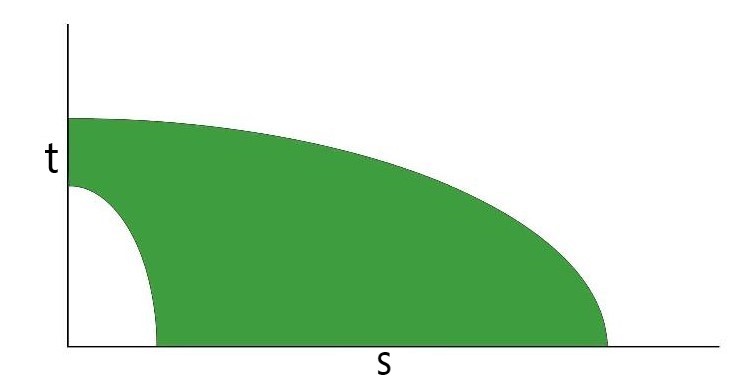}
    \caption{$\widehat \Omega$ in $s-t$ coordinates.}
  \end{minipage}
  \hspace{1.5 cm} 
  \begin{minipage}[b]{0.4\textwidth}
    \includegraphics[width=3in, height=3in]{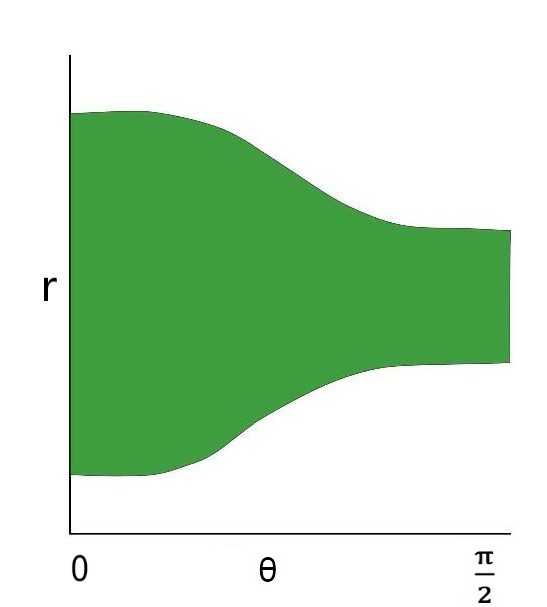}
    \caption{$\widetilde{\Omega}$ in polar polar coordinates}
  \end{minipage}
\end{figure}  

\vspace{.5 cm}

\noindent
\textbf{Toroidal domains.}  Consider a standard torus in $ \IR^3$  given by $ (\sqrt{x^2+y^2}-a)^2 + z^2<b^2$ where $ a>b>0$.   We now adapt this to handle more  double domains of revolution.  So we consider $ (s-a)^2+ t^2<b^2$ or in terms of $x$ we have 
\[ \left( \left\{ \sum_{k=1}^m x_k^2 \right\}^\frac{1}{2}-a \right)^2+ \sum_{k=m+1}^N x_k^2 <b^2.\] Now note $ \widehat{\Omega}$ is the upper half of the circle of radius $ b$ in the $(s,t)$ place centered at $ (a,0)$.     We now write this in terms of polar coordinates as before.   Let $ 0<\theta_0<\frac{\pi}{2}$ be such that $ \sin(\theta_0)= \frac{b}{a}$.   Then we can write the domain as  $ g_1(\theta)<r<g_2(\theta)$  for $ 0<\theta<\theta_0$ where  $ g_i$ is given by
\[ g_1(\theta)= a \cos(\theta) - \sqrt{ b^2-a^2 \sin^2(\theta)}, \quad g_2(\theta)=a \cos(\theta) + \sqrt{b^2-a^2 \sin^2(\theta)}.\]   A computation shows that $ g_1$ is increasing and $ g_2$ is decreasing on $ (0, \theta_0)$.   

We can consider a generalized torus where we take more general functions $g_i$.   We will refer to a  \emph{toroidal domain with monotonicity} to be a torus with general $g_i$  but with the added assumption that $ g_1$ is increasing and $ g_2$ is decreasing on $(0, \theta_0)$  where  $\theta_0$ is a unique point such that $g_1(\theta_0)=g_2(\theta_0)$.  In both cases we define 
\[\widetilde{\Omega}:=\big \{ (\theta,r): (s,t) \in \widehat{\Omega} \big\}=\big\{ (\theta,r): g_1(\theta)<r<g_2(\theta), \theta \in (0,\theta_0)\big\}.\]  \\

\noindent
\textbf{$(\mathcal{A})$: Conditions on $a(x)$.} Throughout the paper, we always assume that $a$ is a continuous   function of $(s,t)$ that is $a(x)=a(s,t).$  Moreover,   we say that $a$ satisfies \textbf{$(\mathcal{A})$} if  $a$ is  a continuously  differentiable function with respect to $(s,t)$  and  $sa_t-ta_s \leq 0$ in $ \widehat{\Omega}$. \\

We give some examples of $a(x)$ which satisfy $(\mathcal{A})$:
\begin{itemize}
    \item $a(x)=1$, 
    \item $ a(x)=|x|^\alpha$ where $ \alpha \in \R$, in this case the equation corresponds to the well known H\'enon equation.  Here $a(s,t)=(s^2+t^2)^{\frac{\alpha}{2}}.$
    \item $ a(x)=h\big (\sqrt{x_1^2 + \cdot \cdot \cdot  + x_{m}^2}\big)$ where $h$ is continuously differentiable and $h'\geq 0.$ In this case $a(s,t)=h(s).$ 
    
\end{itemize}

\subsection{Main results}

\begin{thm}\label{main} (Existence results)
\begin{enumerate} \item (Annular domains with monotonicity)
    Suppose $ \Omega \subset \R^N=\R^m \times \R^n$ is an annular domain with monotonicity in $ \IR^N$ with $ n \le m$ and 
    \begin{equation} \label{max_of}
    2<p<\frac{2(n+1)}{n-1}=\max \left\{ \frac{2(n+1)}{n-1},  \frac{2(m+1)}{m-1} \right\}, 
    \end{equation}
 or $ 2<p<\infty$ if  $n=1$ and we also suppose that $ a$ satisfies $(\mathcal{A})$.   Then there is a positive classical solution of (\ref{eq}).
 
 \item (Annular domains without monotonicity)   Suppose $ \Omega$ is an annular domain in $ \IR^N$ with 
    \begin{equation} 
    2<p<\min \left\{ \frac{2(n+1)}{n-1},  \frac{2(m+1)}{m-1} \right\}, 
    \end{equation}
 or $ 2<p<\infty$ if  $m= n=1.$  Then there is a positive classical solution of (\ref{eq}).   
 
 \item (Toroidal domains  with monotonicity)  Suppose $ \Omega$ is a toroidal domain with monotonicity in $ \IR^N$ with 
    \begin{equation} 
    2<p<\frac{2(n+1)}{n-1}, 
    \end{equation} and $ a$ satisfies $(\mathcal{A})$. 
     Then there is a positive classical solution of (\ref{eq}). 
 \end{enumerate} 
 
In all cases of domains with monotonicity the solution $u$ will have a certain monotonicity (this will be clear from the proof). 
\end{thm}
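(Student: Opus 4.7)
The plan is to pass from the supercritical $N$-dimensional problem to an effectively lower-dimensional subcritical one by combining the double rotational invariance with an angular monotonicity. Throughout I would work on $H_{\mathrm{sym}}^{1}\subset H_{0}^{1}(\Omega)$, the subspace of functions depending only on $(s,t)$, equipped with the energy
\[
I(u)=\frac{1}{2}\int_{\Omega}|\nabla u|^{2}\,dx-\frac{1}{p}\int_{\Omega}a(x)\,u_{+}^{p}\,dx.
\]
By the principle of symmetric criticality, any critical point of $I$ on $H_{\mathrm{sym}}^{1}$ is already a weak solution on $H_{0}^{1}(\Omega)$. Once a nontrivial nonnegative critical point is produced, standard elliptic regularity promotes it to a classical solution and the strong maximum principle yields positivity. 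This reduction is common to all three parts; the real work lies in constructing the critical point against the supercritical growth of $u_{+}^{p}$.

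Case~(2), annular without monotonicity, is the cleanest. In $(\theta,r)$ coordinates on $\widetilde{\Omega}$ the volume element carries the weight $r^{N-1}\cos^{m-1}\theta\sin^{n-1}\theta$, and a Strauss-type pointwise bound applied separately in each angular sector yields a compact embedding $H_{\mathrm{sym}}^{1}\hookrightarrow L^{p}(\Omega)$ for exactly $p<\min\{2(n+1)/(n-1),2(m+1)/(m-1)\}$, and for every $p<\infty$ when $m=n=1$. Mountain-pass geometry for $I$ on $H_{\mathrm{sym}}^{1}$ is standard under $p>2$, and the Palais--Smale condition follows from the compact embedding, so the Ambrosetti--Rabinowitz theorem produces the desired nontrivial critical point.

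For cases~(1) and (3) the exponent $p$ is pushed past $2(m+1)/(m-1)$, so working on $H_{\mathrm{sym}}^{1}$ alone is no longer enough. Here I would further restrict to the closed convex cone
\[
K=\bigl\{u\in H_{\mathrm{sym}}^{1}:u(r,\theta)\text{ is nonincreasing in }\theta\text{ on }(0,\theta_{*})\bigr\},
\]
with $\theta_{*}=\pi/2$ in case~(1) and $\theta_{*}=\theta_{0}$ in case~(3). On $K$, each function is dominated in the angular direction by its trace at $\theta=0$, which lives in a weighted one-variable space controlled by $r$; this extra integrability upgrades the embedding $K\hookrightarrow L^{p}(\Omega)$ to be compact in the larger range $p<2(n+1)/(n-1)$. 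The hypotheses that $g_{1}$ is increasing and $g_{2}$ decreasing are what make $K$ nontrivial, and the assumption $n\leq m$ in (1) is what ensures that $2(n+1)/(n-1)$ is actually the larger of the two thresholds in \eqref{max_of}.

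The main obstacle, and the place where the abstract nonsmooth variational principle announced in the abstract and the structural hypothesis $(\mathcal{A})$ become indispensable, is to show that a critical point of $I|_{K}$ is in fact a critical point of $I$ on $H_{\mathrm{sym}}^{1}$: because $K$ is only convex and not linear, arbitrary variations are not admissible. Following the variational principle, the key step is to verify that for every $u\in K$ the family $P_{K}\bigl(u-\tau\nabla I(u)\bigr)$ stays in $K$ for all small $\tau>0$, i.e.\ the pseudo-gradient flow of $I$ leaves $K$ invariant. A direct computation in polar coordinates shows that the angular derivative of $a(x)\,u^{p}$ carries the sign of $sa_{t}-ta_{s}$; the hypothesis $sa_{t}-ta_{s}\leq 0$ in $\widehat{\Omega}$ therefore forces the $\theta$-monotonicity to be propagated by the flow. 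Once this invariance is established, the constrained mountain-pass critical point is a genuine critical point of $I$ on $H_{\mathrm{sym}}^{1}$, the argument closes as in case~(2), and the monotonicity conclusion in the theorem is read off directly from $u\in K$.
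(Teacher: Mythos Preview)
Your overall architecture is right: reduce by the $G$-symmetry, use the cone $K$ of $\theta$-nonincreasing functions to get the improved compact embedding in cases (1) and (3), run a mountain-pass argument on $K$, and then argue that the constrained critical point is an unconstrained one. But you have misidentified what the paper's variational principle actually asks for in this last step, and this is the heart of the proof.

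The invariance you describe---that $P_{K}\bigl(u-\tau\nabla I(u)\bigr)\in K$ for small $\tau$, i.e.\ that a pseudo-gradient flow preserves $K$---is \emph{not} the condition in the abstract principle. What is required (see Definition~\ref{def-invar} and Proposition~\ref{prn}) is the \emph{pointwise invariance condition}: at the constrained critical point $\bar u\in K$ one must produce $\bar v\in K$ solving the linear problem $-\Delta\bar v=a(x)\bar u^{\,p-1}$ in $\Omega$, $\bar v=0$ on $\partial\Omega$. In other words, the map $u\mapsto(-\Delta)^{-1}\bigl(a(x)u^{p-1}\bigr)$ must send $K$ into $K$. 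Your sentence ``the angular derivative of $a(x)u^{p}$ carries the sign of $sa_{t}-ta_{s}$'' is aimed at the right quantity, but knowing that the right-hand side has $\partial_\theta\le 0$ does not by itself give $\partial_\theta v\le 0$. The paper obtains this by setting $w=v_\theta=sv_t-tv_s$, deriving the singular elliptic equation it satisfies (with zeroth-order terms $\tfrac{m-1}{s^{2}}w+\tfrac{n-1}{t^{2}}w$ and right-hand side $H=u^{p-1}(sa_t-ta_s)+(p-1)au^{p-2}(su_t-tu_s)\le 0$), checking the boundary sign of $w$ via the monotonicity of $g_1,g_2$, and then running a maximum-principle/test-function argument that carefully avoids the degenerate sets $\{s=0\}$ and $\{t=0\}$ through a cutoff $u_k=\min(u,k)$. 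None of this is captured by a projection/flow statement.

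One further gap: ``standard elliptic regularity'' does not immediately apply, because for supercritical $p$ one only knows $a(x)u^{p-1}\in L^{p'}(\Omega)$ with $p'$ possibly below $N/2$. The paper closes this with a Moser-type iteration that uses the improved embedding $K\hookrightarrow L^{2(n+1)/(n-1)}$ at each step (testing against truncated powers $u^{2t_k-1}$, which remain in $K$) to reach $u^{p-1}\in L^{q}$ for some $q>N/2$, after which Schauder theory applies. You should indicate this iteration rather than invoke regularity as a black box.
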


\begin{remark}  \label{remark_about_thm1}
  \begin{enumerate}  
  \item We believe the above result regarding an annular domain without monotonicity  is well known and is the  gain of compactness one expects with an annular domain.
  We are including it since we get the result as a by product of our approach and also so we can compare with the case where one adds monotonicity.   See Theorem  \ref{compact_no_mono} for the related imbedding with and without monotonicity.   It should be noted that the monotonicity is needed for both an improved imbedding to hold and also for the pointwise invariance property to hold, which is crucial to our approach. 
  
  \item For the toroidal domains we can handle both the case with and without monotonicity.  We are not stating a result for the case without monotonicity.  We point out that monotonicity here is not allowing an increased range of $p$, see Corollary \ref{torus_imbed}.
  
  \item The function $a(x)$ is not adding any compactness to the problem; the same proof works if  $ a(x)=1$. 
  
  \item In this work we are dealing with domains of double revolution.  We suspect similar arguments would work for \emph{domains of triple revolution} and higher order analogs.
  
   \end{enumerate} 
  \end{remark}

Next we discuss the case when $a(x)=a(|x|)$ is radial,  and $\Omega$ is an annulus,  
 that is $\Omega=\{x:\,  R_1< |x|<R_2\}:$ 
 \begin{equation} \label{eqz}
\left\{\begin{array}{ll}
-\Delta u = a(|x|) u^{p-1} &  \mbox{ in } \Omega, \\
u>0    &   \mbox{ in } \Omega, \\
u= 0 &   \mbox{ on } \pOm.
\end {array}\right.
\end{equation}
 
 Note that in this case the problem (\ref{eqz}) always has a positive radial solution.   We remark that any radial function $a(x)=a(|x|)$
 can be written in the $(s,t)$ coordinates as follows:
 \[{\bf a}(s,t)=a(|x|)=a(\sqrt{s^2+t^2}),\]
 for any $m,n\geq 1$ with $N=m+n.$
 In the following theorem we shall prove that one can also obtain non-radial solutions.
\begin{thm}\label{nnr} Take $m,n\geq 1$ with $N=m+n.$ Let $u=u(s,t)$ be the  solution of (\ref{eqz}) obtained in Theorem \ref{main} with
\[ s:= \left\{ x_1^2 + \cdot \cdot \cdot  + x_{m}^2\right\}^\frac{1}{2}, \qquad t:=\left\{x_{m+1}^2 + \cdot \cdot \cdot + x_N^2 \right\}^\frac{1}{2}.  \]Define 
\[\lambda_1=\inf_{0 \neq w \in H^1_0(\Omega)}\frac{\int_\Omega |\nabla w|^2 \,dx}{\int_\Omega \frac{| w|^2 }{|x|^2}\,dx}.\]

If  $p-2>\frac{2N}{\lambda_1}$ then $u$ is not radial. 
\end{thm}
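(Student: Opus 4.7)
The plan is to argue by contradiction: assume $u(x)=U(|x|)$ is radial and construct a non-radial admissible perturbation $v$ at which the second variation of
\[
\Phi(w):=\tfrac12\int_\Omega|\nabla w|^2\,dx-\tfrac1p\int_\Omega a(|x|)\,w^p\,dx
\]
is strictly negative. Since $u$ arises as a minimizer of $\Phi$ on the convex admissible set $K$ of Theorem \ref{main} and $D\Phi(u)[v]=0$ by the Euler--Lagrange equation, this will contradict $\Phi(u+\epsilon v)\ge\Phi(u)$ along directions accessible from $K$.

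The factor $2N$ in the hypothesis dictates the choice of test function. Writing $s=|x|\cos\theta$ with $\theta$ the polar angle in the $(s,t)$-plane, I set
\[
Y(\omega):=\frac{s^2}{|x|^2}-\frac{m}{N}=\cos^2\theta-\frac{m}{N}.
\]
The polynomial $|x|^2 Y(\omega)=s^2-(m/N)|x|^2$ is homogeneous harmonic of degree $2$, so $Y$ is a spherical harmonic on $S^{N-1}$ with $-\Delta_{S^{N-1}}Y=\ell(\ell+N-2)\,Y=2N\,Y$. Crucially, $Y$ depends only on $(s,t)$ and is therefore invariant under the double-revolution group; by contrast the degree-one harmonics $\omega_i$ are \emph{not} invariant under this group, which is precisely why the first relevant angular eigenvalue here is $2N$ rather than $N-1$. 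I then define $v(x):=U(|x|)\,Y(\omega)$.

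Next I verify admissibility: $v\in H^1_0(\Omega)$ since $U$ vanishes on $\partial\Omega$; $v$ is double-revolution by construction; and $\partial_\theta v=-U(|x|)\sin(2\theta)\le 0$ on $(0,\pi/2)$, so $v$ is monotone non-increasing in $\theta$. Therefore $u+\epsilon v$ preserves the monotonicity (and, for $\epsilon$ small, the positivity) defining $K$, so the minimality of $u$ on $K$ gives $\Phi(u+\epsilon v)\ge\Phi(u)$ for all sufficiently small $\epsilon\ge 0$. A routine separation-of-variables computation, combining the angular eigenvalue $2N$ with the radial energy identity $\int(U')^2 r^{N-1}dr=\int aU^p r^{N-1}dr$ obtained by testing the equation against $u$, should yield
\[
Q(v):=\int_\Omega|\nabla v|^2\,dx-(p-1)\int_\Omega au^{p-2}v^2\,dx=C\Bigl(2N\int_\Omega\frac{u^2}{|x|^2}\,dx-(p-2)\int_\Omega|\nabla u|^2\,dx\Bigr),
\]
with $C=\omega_{N-1}^{-1}\|Y\|_{L^2(S^{N-1})}^2>0$. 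Invoking the defining Hardy-type inequality $\lambda_1\int u^2/|x|^2\le\int|\nabla u|^2$ together with the hypothesis $p-2>2N/\lambda_1$, I obtain $Q(v)<0$. Expanding $\Phi(u+\epsilon v)=\Phi(u)+(\epsilon^2/2)Q(v)+O(\epsilon^3)$ then contradicts $\Phi(u+\epsilon v)\ge\Phi(u)$.

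The main obstacle is marrying this abstract second-variation argument to the precise variational principle of Theorem \ref{main}: one must identify the tangent cone to the convex set $K$ at the radial $u$ and verify that one-sided minimality there genuinely forces the necessary condition $Q(v)\ge 0$ for $v$ in the admissible cone. The sign condition $Y'(\theta)\le 0$ on $[0,\pi/2]$ built into $Y=\cos^2\theta-m/N$ is exactly what makes $u+\epsilon v$ admissible, so the choice of test function is essentially forced by the interaction between the double-revolution symmetry and the monotonicity constraint.
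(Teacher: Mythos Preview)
Your test function is exactly right: $Y=\cos^2\theta-\tfrac{m}{N}$ equals $-\tfrac12\psi_1$ where $\psi_1(\theta)=\tfrac{m-n}{N}-\cos 2\theta$ is the paper's second angular eigenfunction with eigenvalue $\mu_1=2N$, and your computation leading to $Q(v)<0$ under the hypothesis $p-2>2N/\lambda_1$ matches the paper's calculation of $M(u,v)<0$ line by line. The admissibility check $\partial_\theta Y\le 0$ on $(0,\tfrac{\pi}{2})$ is also correct and is what places $u+\epsilon v$ in $K$ for $\epsilon\ge 0$.

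The genuine gap is your variational premise. You write that ``$u$ arises as a minimizer of $\Phi$ on the convex admissible set $K$,'' and deduce $\Phi(u+\epsilon v)\ge\Phi(u)$. This is false: in Theorem~\ref{main} the solution is produced via the Mountain Pass Theorem applied to $E_K$ (the relevant lemma establishes the mountain pass geometry and the Palais--Smale condition), not by minimization. The functional $E_K$ is unbounded below on $K$ --- for any nonzero $w\in K$ one has $E_K(tw)\to-\infty$ as $t\to\infty$ --- so there is no minimizer. Even locally $u$ is a saddle: along the ray $t\mapsto tu\in K$ the energy has a strict local maximum at $t=1$. Hence no inequality $\Phi(u+\epsilon v)\ge\Phi(u)$ is available, and your expansion $\Phi(u+\epsilon v)=\Phi(u)+\tfrac{\epsilon^2}{2}Q(v)+O(\epsilon^3)$ with $Q(v)<0$ yields no contradiction by itself. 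Your final paragraph flags this as an ``obstacle,'' but it is not a technical point to be patched; it invalidates the logical skeleton of the argument.

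The paper's repair is to exploit the mountain pass value $c=E_K(u)=\inf_{\gamma\in\Gamma}\max_{\tau}E_K(\gamma(\tau))$. One builds the admissible path $\gamma_\sigma(\tau)=\tau\, l\,(u+\sigma v)$ (here $K$ being a cone and $u+\sigma v\in K$ for the correct sign of small $\sigma$ are essential), sets $h(\sigma)=\max_\tau E_K(\gamma_\sigma(\tau))-c$, and shows $h(0)=h'(0)=0$ and $h''(0)<0$. The last step is precisely your inequality $Q(v)<0$; controlling the cross terms in the second-order analysis also uses that $Y$ has zero mean on $S^{N-1}$ (equivalently $\int_0^{\pi/2}\psi_1\,\omega\,d\theta=0$), a point your outline does not record. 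Then $h(\sigma)<0$ for small nonzero $\sigma$, contradicting the definition of $c$. So the computational core of your proposal is sound, but it must be embedded in the minimax framework rather than a nonexistent minimality.
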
 

Note that $ \lambda_1$ is nothing more then the optimal constant in the classical Hardy inequality on $ \Omega$.  Since $\Omega$ does not contain the origin and is not an exterior domain we see that $ \lambda_1$ is attained and hence $ \lambda_1> \frac{(N-2)^2}{4}$, which is the well known optimal constant in the classical Hardy inequality.    We would also like to remark that the inequality $p-2>\frac{2N}{\lambda_1}$  provides a  relatively simple condition  for the symmetry breaking in (\ref{eqz}).\\
In the next theorem we address the multiplicity of positive solutions for (\ref{eqz}).
\begin{thm} \label{multiplicity_100} For each $1\leq k\leq \Bigl\lfloor\frac{N}{2} \Bigr\rfloor$ (where   $ \lfloor x \rfloor$ is the floor of $x$)  
the equation (\ref{eqz}) has $k$ positive distinct nonradial solutions if  $2+\frac{2N}{ \lambda_1}<p< \frac{2 k+2}{k-1} $ for $k>1$, and    $2+\frac{2N}{ \lambda_1}<p$ for $k=1.$
\end{thm}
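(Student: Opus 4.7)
My plan is to construct, for each $n \in \{1, 2, \ldots, k\}$, a positive solution $u_n$ of (\ref{eqz}) that is invariant under the subgroup $G_n := O(N-n) \times O(n) \subset O(N)$ acting on the first $N-n$ coordinates and the last $n$ coordinates separately, and then to exploit the incompatibility of these symmetry groups for different values of $n$ in order to conclude that the $u_n$ are pairwise distinct.

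For the construction, fix $n \in \{1, \ldots, k\}$ and set $m = N - n$; since $n \leq k \leq \lfloor N/2 \rfloor$, we have $n \leq m$, so the decomposition $\IR^N = \IR^m \times \IR^n$ is admissible. The annulus $\Omega = \{R_1 < |x| < R_2\}$ is trivially an annular domain with monotonicity with respect to this decomposition (the bounding functions are the constants $g_1 \equiv R_1$ and $g_2 \equiv R_2$), and the radial weight $a(|x|) = a(\sqrt{s^2+t^2})$ satisfies $(\mathcal{A})$ since $sa_t - ta_s \equiv 0$. Because $j \mapsto \frac{2(j+1)}{j-1} = 2 + \frac{4}{j-1}$ is decreasing on $(1,\infty)$, the hypothesis $p < \frac{2(k+1)}{k-1}$ forces $p < \frac{2(n+1)}{n-1}$ for every $n \in \{2, \ldots, k\}$, while $n = 1$ carries no upper bound. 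Theorem \ref{main}(1) then supplies a positive classical solution $u_n = u_n(s, t)$ of (\ref{eqz}), automatically $G_n$-invariant. Because $p - 2 > \frac{2N}{\lambda_1}$, Theorem \ref{nnr} yields that each $u_n$ is nonradial.

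The remaining and main step is to prove that the $u_n$ are pairwise distinct. Suppose, for contradiction, $u_n = u_{n'}$ for some $1 \leq n < n' \leq k$, and call this common function $u$. Then $u$ is invariant under the closed subgroup $H \subset O(N)$ generated by $G_n$ and $G_{n'}$; I claim $H = O(N)$, which will force $u$ to be radial and contradict Theorem \ref{nnr}. Passing to Lie algebras, $\mathrm{Lie}(G_n)$ is spanned by the infinitesimal rotations $E_{ij}$ in the $(e_i, e_j)$-planes with either $i, j \leq N - n$ or $i, j \geq N - n + 1$, and analogously for $\mathrm{Lie}(G_{n'})$. Since $n < n'$, the index $k_\ast := N - n$ satisfies $k_\ast \geq N - n' + 1$, so $k_\ast$ lies both in the first block of $G_n$ and in the second block of $G_{n'}$. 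For any pair of indices $i, j$ with $i < k_\ast < j$, we have $E_{i, k_\ast} \in \mathrm{Lie}(G_n)$ (both indices in its first block) and $E_{k_\ast, j} \in \mathrm{Lie}(G_{n'})$ (both indices in its second block), and the identity $[E_{i, k_\ast}, E_{k_\ast, j}] = \pm E_{ij}$ places $E_{ij}$ in the generated algebra. Combined with the $E_{ij}$ already sitting inside each $G$, this generates all of $\mathfrak{so}(N)$, so $H \supseteq SO(N)$; any reflection coming from $G_n$ extends $H$ to $O(N)$. This contradiction is precisely where the hypothesis $n \leq \lfloor N/2 \rfloor$ is used (it guarantees the two decompositions are genuinely different), and it completes the proof.
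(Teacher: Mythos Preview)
Your proof is correct and follows the paper's three-step outline: existence of a $G_n$-invariant solution for each $n\in\{1,\dots,k\}$ via Theorem~\ref{main}(1), nonradiality via Theorem~\ref{nnr}, and pairwise distinctness by incompatibility of the symmetries. The only substantive difference is in the distinctness step. The paper isolates this as Lemma~\ref{mm'} and proves it by an elementary coordinate restriction: writing a hypothetical common solution as $u=f(s,t)=g(s',t')$, one picks an index $j$ lying in different blocks of the two decompositions (any $m'<j\le m$ works), freezes all coordinates except $x_1$ and $x_j$, and reads off directly that $g(|x_1|,|x_j|)=f\bigl(\sqrt{x_1^2+x_j^2},\,0\bigr)$, forcing $g$ to be radial. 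Your route is instead group-theoretic: you show that the closed subgroup generated by $G_n$ and $G_{n'}$ is all of $O(N)$ via the bracket relation $[E_{i,k_\ast},E_{k_\ast,j}]=\pm E_{ij}$ with $k_\ast=N-n$, so invariance under both forces full radial symmetry. Both arguments are short; the paper's is more hands-on and requires no Lie theory, while yours is more structural and would transfer unchanged to other overlapping block decompositions. One small inaccuracy in your closing sentence: the constraint $n\le\lfloor N/2\rfloor$ is already needed in the \emph{existence} step (Theorem~\ref{main}(1) requires $n\le m$), not merely to distinguish the two decompositions.
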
 

Note the above gives various results.  For instance lets suppose $N$ is even and we are interested under what assumptions on the parameters do we have $  \frac{N}{2}$ solutions.    It would be sufficient that 
\[ 2 + \frac{2N}{\lambda_1} <p< \frac{2(N+2)}{N-2},\] hold.   Under suitable assumptions on $ R_1,R_2$ we can show $ \lambda_1$ gets large and hence we see there is a range of $p$ where this holds.  Indeed,
we use the result of Theorem \ref{multiplicity_100} to prove the following Corollary.

\begin{coro}\label{annul_resu}  The following assertions hold;
\begin{enumerate}
\item For $ 0<R_1<R_2<\infty$ and sufficiently large $ p$ there is a nonradial solution of (\ref{eqz}). 
\item For fixed $2<p<\frac{2\Bigl\lfloor\frac{N}{2} \Bigr\rfloor+2}{\Bigl\lfloor\frac{N}{2} \Bigr\rfloor-1}$ and sufficiently large $\lambda_1$ there are $\Bigl\lfloor\frac{N}{2} \Bigr\rfloor$ distinct  positive   nonradial solutions of (\ref{eqz}). For instance,  under either of the following conditions $\lambda_1$ can be sufficiently large and therefore there are $\Bigl\lfloor\frac{N}{2} \Bigr\rfloor$ distinct  positive   nonradial solutions of (\ref{eqz}):
\begin{itemize}
\item[2-a:] Let  $ R_1=R$ and $ R_2=R+1.$  Then $\lambda_1$  is sufficiently large for large values of  $R$.   Note by scaling we can take $ R_1=1$ and $ R_2=1+\frac{1}{R}$ and get the same result for large $R$. 

\item[2-b:] Let $ R<\gamma(R)$ with $ \frac{\gamma(R)}{R} \rightarrow 1$ as $ R \rightarrow \infty$.   With $ \Omega_R=\{x \in \IR^N: R<|x|<\gamma(R) \}$ then for large enough $R$  the  $\lambda_1$  corresponding to $ \Omega_R$ is sufficiently large.

\end{itemize}
\end{enumerate} 
\end{coro}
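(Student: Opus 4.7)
Both parts of the corollary should follow by applying Theorem \ref{multiplicity_100} once I can arrange that the requisite inequality $p - 2 > 2N/\lambda_1$ holds for the chosen $k$. For part (1), I fix the annulus, observe $\lambda_1$ is a positive constant depending only on $R_1,R_2,N$, take $k=1$, and pick any $p$ with $p > 2 + 2N/\lambda_1$; Theorem \ref{multiplicity_100} then yields a nonradial solution. For part (2), I instead fix $p$ in the stated range and take $k=\lfloor N/2\rfloor$, so that the upper bound $p<(2k+2)/(k-1)$ is built into the hypothesis; it then suffices to ensure $\lambda_1>2N/(p-2)$, i.e. $\lambda_1$ large. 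Hence the whole corollary reduces to proving that the Hardy constant $\lambda_1(\Omega_R)$ blows up under the scaling assumptions of (2-a) and (2-b).

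\textbf{Lower bound on the Hardy constant for thin annuli.} The plan is to compare $\lambda_1$ with the first Dirichlet eigenvalue $\mu_1$ of $-\Delta$. Since on $\Omega_R=\{R<|x|<\gamma(R)\}$ we have $|x|\ge R$, I obtain
\[
\int_{\Omega_R}\frac{|w|^2}{|x|^2}\,dx\le \frac{1}{R^2}\int_{\Omega_R}|w|^2\,dx,
\]
and therefore $\lambda_1(\Omega_R)\ge R^2\mu_1(\Omega_R)$. Because $\Omega_R$ lies in a slab of width $\gamma(R)-R$ in the radial direction, the one-dimensional Dirichlet eigenvalue bound gives $\mu_1(\Omega_R)\ge \pi^2/(\gamma(R)-R)^2$. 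Combining the two,
\[
\lambda_1(\Omega_R)\ \ge\ \frac{\pi^2\,R^2}{(\gamma(R)-R)^2}.
\]
For case (2-a), $\gamma(R)=R+1$, so the bound becomes $\pi^2R^2\to\infty$. The rescaled version $R_1=1,\ R_2=1+1/R$ is equivalent to (2-a) by the scale invariance of the Hardy quotient (a change of variable $y=x/R$ leaves the quotient $\int|\nabla w|^2/\int|w|^2|x|^{-2}$ unchanged), so it follows at once. For case (2-b), $\gamma(R)-R=o(R)$, hence $R^2/(\gamma(R)-R)^2\to\infty$ and again $\lambda_1\to\infty$.

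\textbf{Assembly and likely obstacle.} With the blow-up of $\lambda_1$ in hand, for any fixed $p$ in the stated range the inequality $2+2N/\lambda_1<p$ holds for all sufficiently large $R$, and Theorem \ref{multiplicity_100} applied with $k=\lfloor N/2\rfloor$ produces the required $\lfloor N/2\rfloor$ distinct positive nonradial solutions. The only step that requires care is the slab eigenvalue estimate; strictly speaking one should slice $\Omega_R$ along the radial direction and apply the standard one-dimensional Poincar\'e inequality on each ray, so a brief Fubini-type argument is needed to pass from the one-dimensional bound to a genuine lower bound on $\mu_1(\Omega_R)$. Beyond that, the corollary is essentially a bookkeeping consequence of Theorem \ref{multiplicity_100} together with the Hardy-to-Dirichlet comparison above.
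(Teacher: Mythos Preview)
Your reduction of both parts to Theorem \ref{multiplicity_100} is exactly what the paper does: part 1 is the case $k=1$ with $\lambda_1>0$, and part 2 is the case $k=\lfloor N/2\rfloor$ once $\lambda_1$ is forced to be large. Where you depart from the paper is in the proof that $\lambda_1(\Omega_R)\to\infty$.

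The paper argues by analyzing the minimizer: it shows $\lambda_R$ is attained by a positive radial $w_R$, rescales $v_R(r)=w_R(R+(\gamma(R)-R)r)$ on $(0,1)$, normalizes, and passes to the limit in the resulting ODE to identify the limit equation $-v''=\tilde\lambda v$ with Dirichlet conditions, whence $\tilde\lambda=\pi^2$. This yields the sharp asymptotic $\lambda_R(\gamma(R)-R)^2/R^2\to\pi^2$, which is more than is needed.

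Your route is more elementary and perfectly adequate for the corollary: the pointwise bound $|x|\ge R$ gives $\lambda_1(\Omega_R)\ge R^2\mu_1(\Omega_R)$, and a radial one-dimensional Poincar\'e inequality bounds $\mu_1(\Omega_R)$ from below. One small correction: the annulus is not literally contained in a Euclidean slab of width $\gamma(R)-R$, so your stated inequality $\mu_1(\Omega_R)\ge \pi^2/(\gamma(R)-R)^2$ is not exactly right as written. The Fubini argument you allude to, applied ray-by-ray in polar coordinates, actually produces
\[
\mu_1(\Omega_R)\ \ge\ \frac{\pi^2}{(\gamma(R)-R)^2}\left(\frac{R}{\gamma(R)}\right)^{N-1},
\]
the extra factor coming from the weight $r^{N-1}$. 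Since $\gamma(R)/R\to 1$ in both (2-a) and (2-b), this factor tends to $1$ and your conclusion $\lambda_1(\Omega_R)\to\infty$ is unaffected. With that adjustment your argument is complete, shorter than the paper's, and trades the precise constant $\pi^2$ (which the corollary does not use) for a direct inequality.
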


Note that {\it {2}-a}  is a particular case of {\it {2}-b}. We are stating them as two separate  assertions as {\it {2}-a} is widely studied  in the literature while {\it {2}-b}  is a less known generalization of{ \it {2}-a}.\\
We also remark that the results in Corollary \ref{annul_resu} recovers and complements some of the existing results on the subject (see \cite{Ann2, Weth_annulus} and the references therein).




\subsection{The approach} 

Our plan is to prove existence for (\ref{eq}) by making use of  a new abstract variational principle established recently in \cite{Mo3} (see also \cite{Mo2, Mo4, Mo} for more applications).  To be more specific,
let $V$ be a reflexive Banach space, $V^*$ its topological dual  and let $K$ be a non-empty convex and weakly closed subset of $V$.
Assume that  $\Psi : V \rightarrow \mathbb{R} \cup \{+\infty\}$ is a proper, convex, lower semi-continuous function which is  G\^ateaux differentiable on K. The  G\^ateaux derivative of $\Psi$ at  each point $u \in K$ will be denoted by  $D\Psi(u)$. The restriction of $\Psi$ to $K$ is denoted by $\Psi_K$ and defined by
\begin{eqnarray}
\Psi_K(u)=\left\{
  \begin{array}{ll}
      \Psi(u), & u \in K, \\
    +\infty, & u \not \in K.
  \end{array}
\right.
\end{eqnarray}
For a given functional $\Phi \in C^{1}(V, \mathbb{R})$ denote by $D \Phi \in V^*$ its derivative and consider the functional $I_K: V \to (-\infty, +\infty]$ defined by
 \begin{eqnarray*}
 I_K(u):= \Psi_K(u)-\Phi(u).
 \end{eqnarray*}
According to Szulkin \cite{szulkin}, we have the following definition for critical points of $I_K$.

\begin{definition}\label{ddd}
A point $u_0\in  V$ is said to be a critical point of $I_K$ if $I_K(u_0) \in \mathbb{R}$ and if it satisfies the following inequality
\begin{equation}
 \CO{D \Phi(u_0)}{ u_0-v} + \Psi_K(v)- \Psi_K(u_0) \geq 0, \qquad \forall v\in V,
\end{equation}
where $ \CO{ \cdot }{ \cdot } $ is the duality pairing between $V$ and its dual $V^*.$
\end{definition}

We also recall the notion of point-wise invariance condition from \cite{Mo3}.
\begin{definition}\label{def-invar}
We say that the triple $(\Psi, K, \Phi)$ satisfies the point-wise invariance condition at a 
 point $u_{0}\in V$ if there exists a convex G\^ateaux differentiable function $G: V\to \R$ and a point $v_0 \in K$ such that 
 \[D\Psi(v_0)+DG(v_0)=D \Phi(u_0)+DG(u_0).\]
\end{definition}

We shall now recall the following variational principle recently
established  in \cite{Mo3} (see also \cite{Mo}).
\begin{thm} \cite{Mo3} \label{vp}
Let $V$ be a reflexive Banach space and $K$ be a convex and
weakly closed subset of $V$. Let $\Psi : V \rightarrow
\mathbb{R}\cup \{+\infty\}$ be a convex, lower semi-continuous
function which is G\^ateaux differentiable on $K$ and let $\Phi
\in C^{1}(V, \mathbb{R})$. Assume that  the following two assertions hold:\begin{enumerate}
\item[$(i)$] The functional $I_K: V \rightarrow \mathbb{R}
\cup \{+\infty\}$ defined by $I_K(u):= \Psi_K(u)-\Phi(u)$ has a
critical point $u_{0}\in V$ as in Definition \ref{ddd}, and;
\\
\item[$(ii)$] the triple $(\Psi, K, \Phi)$ satisfies the point-wise invariance condition at the  
 point $u_{0}$.
\end{enumerate} 
Then $u_{0}\in K$ is a solution of the equation 
 \begin{equation} \label{equ1} D \Psi(u) =D \Phi(u).
 \end{equation}
 
 \end{thm}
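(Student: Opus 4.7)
The plan is to combine the two hypotheses as follows. The critical point condition makes $u_0$ a constrained minimizer (over $K$) of a certain affine perturbation of $\Psi$, whereas the pointwise invariance produces a helper point $v_0 \in K$ which is an \emph{unconstrained} minimizer (over all of $V$) of a related convex functional. The punchline will be to show that $u_0$ also realizes this unconstrained minimum, so that the usual first-order condition gives the Euler equation.

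First I would note that, since $u_0$ is a critical point of $I_K$ with $I_K(u_0)\in\R$, one has $\Psi_K(u_0)<\infty$ and thus $u_0\in K$. Testing Szulkin's inequality against arbitrary $v\in K$ (and using $\Psi_K=\Psi$ on $K$) yields
\[
\Psi(v)-\langle D\Phi(u_0),v\rangle\;\ge\;\Psi(u_0)-\langle D\Phi(u_0),u_0\rangle \qquad \forall\, v\in K,
\]
so $u_0$ minimizes $v\mapsto \Psi(v)-\langle D\Phi(u_0),v\rangle$ on $K$. Next, introduce the convex, Gâteaux differentiable functional
\[
F(v)\;:=\;\Psi(v)+G(v)-\langle D\Phi(u_0)+DG(u_0),v\rangle,\qquad v\in V.
\]
The pointwise invariance condition $D\Psi(v_0)+DG(v_0)=D\Phi(u_0)+DG(u_0)$ says precisely that $DF(v_0)=0$, and since $F$ is convex this forces $v_0$ to be a global minimizer of $F$ on $V$.

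The heart of the argument is then to show $F(u_0)=F(v_0)$, i.e.\ that $u_0$ is also a global minimizer of $F$. Since $v_0\in K$, the displayed inequality applied at $v=v_0$ gives $\Psi(u_0)-\Psi(v_0)\le\langle D\Phi(u_0),u_0-v_0\rangle$, and convexity of $G$ with Gâteaux derivative at $u_0$ gives $G(u_0)-G(v_0)\le\langle DG(u_0),u_0-v_0\rangle$. Adding these two inequalities shows $F(u_0)\le F(v_0)$, and since $v_0$ minimizes $F$ we conclude $F(u_0)=F(v_0)=\min_V F$. Because $u_0\in K$, $\Psi$ is Gâteaux differentiable at $u_0$; together with differentiability of $G$ and $\Phi$ this makes $F$ differentiable at $u_0$, and the first-order optimality condition $DF(u_0)=0$ collapses (the $DG(u_0)$ terms cancel) to $D\Psi(u_0)=D\Phi(u_0)$, as required.

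The only genuinely delicate step is the passage from the constrained minimality of $u_0$ on $K$ to the unconstrained minimality of $u_0$ (through $F$) on all of $V$; that is where the role of $G$ is essential, since without the auxiliary convex function one cannot absorb the discrepancy $D\Phi(u_0)-D\Psi(v_0)$ into a gradient structure. Everything else (deducing $u_0\in K$, rewriting Szulkin's inequality, invoking convexity of $G$, and applying the first-order condition) is routine once $F$ is set up correctly.
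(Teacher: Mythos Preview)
Your argument is correct and in fact proves the theorem in its full generality (with an arbitrary auxiliary convex $G$). Note, however, that the paper does not prove Theorem~\ref{vp} in this generality; it cites \cite{Mo3} for that and instead supplies only the special case Proposition~\ref{prn}, where $\Psi(u)=\tfrac12\int_\Omega|\nabla u|^2$ and $G\equiv 0$.

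In that special case the paper's route is slightly different from yours. Rather than introducing the auxiliary functional $F$ and showing that $u_0$ is a \emph{global} minimizer of it, the paper plugs $v=\bar v$ into Szulkin's inequality, uses the linear equation $-\Delta\bar v=D\Phi(\bar u)$ to rewrite the right-hand side as $\int_\Omega \nabla\bar v\cdot\nabla(\bar v-\bar u)$, and then exploits the quadratic (Hilbert) structure of $\Psi$ to deduce $\tfrac12\int_\Omega|\nabla(\bar v-\bar u)|^2\le 0$, i.e.\ $\bar u=\bar v$. The equation for $\bar u$ then follows because $\bar v$ already solves it. Your approach buys more: it works for any convex $\Psi$ that is merely G\^ateaux differentiable on $K$ and accommodates the auxiliary $G$, and it does not need to conclude $u_0=v_0$ (which indeed can fail when $F$ is not strictly convex). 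The paper's approach buys a bit of concreteness, since in the quadratic case the single inequality $\|\bar u-\bar v\|\le 0$ is immediate and avoids the detour through global minimality of $F$.
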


For the convenience of the reader, by choosing appropriate  functions $\Psi, \Phi$ and a  convex set $K$ corresponding to our problem (\ref{eq}), we  shall provide a proof to a particular case of Theorem \ref{vp} applicable to this problem. \\
    We observe  that by picking $K$ appropriately one can gain compactness;  note the smaller we pick $K$ the more manageable  $I_K$ becomes which makes proving the existence of critical points of $I_K$ easier.   But this needs to be balanced with   the second part of the Theorem \ref{vp} where we need to verify the   point-wise invariance condition.  We will describe the $K$ we will pick in detail later but we mention that $K$ will be a collection of functions with some monotonicity in $\theta$ where $ \theta$ is the polar angle when writing $ (s,t)$ in polar coordinates.  This monotonicity in $\theta$ is exactly what allows us to obtain added compactness in problems on an annulus with monotonicity (as mentioned before there is added compactness from the symmetry of the domain and this is well known;  this added compactness from monotonicity is some additional compactness).

\subsection{Background}

Here we give some background on the the problem and for this we take $ a(x)=1$ and hence we consider 
\begin{equation} \label{eq_a=1}
\left\{\begin{array}{ll}
-\Delta u =  u^{p-1} &  \mbox{ in } \Omega, \\
u>0    &   \mbox{ in } \Omega, \\
u= 0 &   \mbox{ on } \pOm.
\end {array}\right.
\end{equation}
We assume $ \Omega$ a bounded smooth domain in $ \IR^N$.  When $N=2$ there is a positive smooth solution of (\ref{eq_a=1}) for any $ p>2$.  For $N \ge 3$ the critical  exponent   $2^*:=\frac{2N}{N-2}$ plays a crucial role and for $ 2<p<2^*$ a variational approach shows the existence of a smooth positive solution of (\ref{eq_a=1}).
       For general domains in the critical/supercritical case, $ p \ge 2^*$, the existence versus nonexistence of positive solutions of (\ref{eq_a=1}) presents a great degree of difficulties; see \cite{Bahri-Coron, Coron,del_p,man_1,man_2,man_3,wei,  A2, A3,Passaseo, Passaseo_2,shaaf, schmitt}.  Many of these results are very technical and some require  perturbation arguments. 
    
    The approach we will take is coming from some recent approaches in Neumann versions of 
    (\ref{eq}) given by 
    \begin{equation} \label{eq_Nball}
\left\{\begin{array}{rr}
-\Delta u + u = a(r) u^{p-1} &  \mbox{ in } B_1, \\
\partial_\nu u= 0 &   \mbox{ on } \partial B_1,
\end {array}\right.
\end{equation}
    where $B_1$ is the unit ball centered at the origin in $\IR^N$.  The interest here is in obtaining nontrivial solutions for values of $p>\frac{2N}{N-2}$. 
 In  \cite{first_rad_neum} they considered the variant of (\ref{eq_Nball}) given by $ -\Delta u + u = |x|^\alpha u^{p-1}$ in $B_1$ with $ \frac{ \partial u}{ \partial \nu}=0$ on $ \partial B_1$ (for Dirichelt versions of the H\'enon equation see, for instance, \cite{Ni,glad,cowan}).
 They proved the existence of a positive radial solutions of this equation with arbitrary growth using a shooting argument.   The solution turns out to be an increasing function.
They also perform numerical computations to see the existence of positive oscillating solutions.  In  \cite{serra_tilli} they considered  (\ref{eq_Nball})   along with the classical energy associated with the equation given by
  \[E(u):=\int_{B_1} \frac{ | \nabla u|^2 +u^2}{2}\, dx  -\int_{B_1} a(|x|)F(u)\,  dx,\] where $F'(u)=f(u)$ (they considered a more general nonlinearity).    Their goal was  to find critical points of $E$ over $H^1_{rad}(B_1):=\{ u \in H^1(B_1):  u \mbox{ is radial} \}$.  Of course since $f$ is supercritical the standard approach of finding critical points will present difficulties and hence their idea was to find critical points of $E$ over the cone $ \{ u \in H_{rad}^1(B_1): 0 \le u, \mbox{ $u$ increasing} \}$.   Doing this is somewhat standard but now the issue is the critical points don't necessarily correspond to critical points over $H_{rad}^1(B_1)$ and hence one can't conclude the critical points solve the equation.   The majority of their work was to show that in fact the critical points of $E$ on the cone are really critical points over the full space.   We mention that this work generated a lot of interest in this equation and many authors investigated these idea's of using monotonicity to overcome a lack of compactness.  For further results  regarding these Neumann problems on radial domains (some using these monotonicity ideas and some using other new methods) see  \cite{first_grossi, Weth, secchi, grossi_new, den_serra, add1,add2,add3,add4,add5,add6}.

 In \cite{ACL} we considered (\ref{eq_Nball}) using a new variational principle (see Theorem \ref{vp}).  We obtained positive solutions of (\ref{eq}); assuming the same assumptions as the earlier works.   In the case of $a(x)=1$ we obtained the existence of a positive nonconstant solution of (\ref{eq}), which was a known result, but our approach allowed us to deal directly with the   supercritical nonlinearity without the need for any cut off procedures.

In   \cite{cowan-abbas}  we examined the Neumann problem given by 
\begin{equation}   \label{eq_trans}
\left\{\begin{array}{ll}
-\Delta u + u= a(x) f(u), &  \mbox{ in } \Omega, \\
u>0,    &   \mbox{ in } \Omega, \\
\frac{\partial u}{\partial \nu}= 0, &   \mbox{ on } \pOm,
\end {array}\right.
\end{equation}
where $\Omega$ is a bounded domain in  $\IR^N$ which was a \emph{domain of $m$ revolution} with certain symmetry and where $a$ also satisfied some symmetry assumptions. 
  For the sake of the background one can take $ f(u)=u^{p-1}$ and hence a supercritical result would be if $ p>2*:=\frac{2N}{N-2}$.   In this case  we obtained positive nontrivial monotonic solutions of (\ref{eq_trans}) provided $ 2<p< 2_m^*:= \frac{2m}{m-2}$. For Neumann problems on general domains see \cite{100, 101, 102,104,103,105,50,106}.

We now return to the Dirichlet problems.   There have been many supercritical works that deal with domains that have certain symmetry, for instance, see \cite{clapp1,clapp2,clapp3, clapp4,clapp5,clapp6,clapp7}.  

In   the case of  the annulur domains the authors in  \cite{Ann100, Ann101,Ann102} examined subcritical or slightly supercritical problems on expanding annuli and obtained nonradial solutions.  In \cite{Ann2}  they obtain nonradial solutions to supercritical problems on expanding annulur domains.  In \cite{Ann1} they consider nonradial expanding annulur domains and they obtain the existence of positive solutions.   In \cite{wei, clapp5}  they consider domains with a small hole and obtain positive solutions.

We now consider the very recent work 
\cite{Weth_annulus} where they examined 
\begin{equation}   \label{recent_annulus}
\left\{\begin{array}{ll}
-\Delta u + u= a(x) |u|^{p-2} u, &  \mbox{ in } A, \\
u>0,    &   \mbox{ in } A, \\
u= 0, &   \mbox{ on } \partial A,
\end {array}\right.
\end{equation}
where $ A:=\{x \in \IR^N:  0<R_1<|x|<R_2<\infty\}$ and $ a(x)$ is positive, even with respect to $x_N$, axial symmetric with respect to the $x_N$ axis and, using the earlier defined coordinates,  $ a=a(r,\theta)$ satisfies $ a_\theta \le 0$ in upper half of the $A$. 
They work on a convex cone $\mathcal{K}$ which is characterized by monotonicity properties of the functions; the functions are increasing in $\theta$ (this idea of monotonicity has been used a lot in Neumann problems but is new for Dirichlet problems).  They consider the standard energy functional associated with (\ref{recent_annulus}) and they work on $\mathcal{K}$ and they also consider $\mathcal{N}_{\mathcal{K}}:=\{ u \in \mathcal{K}: I'(u)u=0 \}$ which is a Nehari type set adapted to $\mathcal{K}$.  Then then develop a mountain pass type argument that utilizes some technical aspects of an associated flow and then they use some involved arguments from dynamical systems to prove the existence of a solution.  They obtain a positive solution for all $p>2$. In the case of $a(x)=1$ they obtain results regarding nonradial solutions under certain assumptions on the radii of the annulus or the value of $p$.

\section{Preliminaries}

In this section we recall some important definitions and results from convex analysis and  minimax principles for lower semi-continuous functions. 

Let $V$ be a  real Banach  space and $V^*$ its topological dual  and let $\langle \cdot, \cdot \rangle $ be the pairing between $V$ and $V^*.$ 
The weak topology on $V$ induced by $\langle \cdot,\cdot  \rangle $ is denoted by $\sigma(V,V^*).$  A function $\Psi : V \rightarrow \mathbb{R}$ is said to be weakly lower semi-continuous if
\[\Psi(u) \leq \liminf_{n\rightarrow \infty} \Psi(u_n),\]
for each $u \in V$ and any sequence ${u_n} $ approaching $u$ in the weak topology $\sigma(V,V^*).$
Let $\Psi : V \rightarrow \mathbb{R}\cup \{+\infty\}$ be a proper convex  function. The subdifferential $\partial \Psi $ of $\Psi$
is defined  to be the following set-valued operator: if $u \in Dom (\Psi)=\{v \in V; \, \Psi(v)< +\infty\},$ set
\[\partial \Psi (u)=\{u^* \in V^*; \langle u^*, v-u \rangle + \Psi(u) \leq \Psi(v) \text{  for all  } v \in V\}\]
and if $u \not \in Dom (\Psi),$ set $\partial \Psi (u)=\varnothing.$ If $\Psi$ is G\^ateaux differentiable at $u,$ denote by $D \Psi(u)$ the derivative of $\Psi$ at $u.$ In this case  $\partial \Psi (u)=\{ D  \Psi(u)\}.$\\

We shall now recall some notations and results for the minimax principles for lower semi-continuous functions.
\begin{definition}\label{cp}
Let $V$ be a real Banach space,  $\Phi\in C^1(V,\mathbb{R})$ and $\Psi: V\rightarrow (-\infty, +\infty]$ be proper (i.e. $Dom(\Psi)\neq \emptyset$), convex and lower semi-continuous. 
A point $u\in  V$ is said to be a critical point of \begin{equation} \label{form}I:=  \Psi-\Phi \end{equation} if $u\in Dom(\Psi)$  and if it satisfies
the inequality
\begin{equation}
 \CO{D \Phi(u)}{ u-v}+ \Psi(v)- \Psi(u) \geq 0, \qquad \forall v\in V.
\end{equation}
\end{definition}

\begin{definition}\label{psc}
We say that $I$ satisfies the Palais-Smale compactness  condition (PS)   if
every sequence $\{u_n\}$ such that
\begin{itemize}\label{2}
\item  $I[u_n]\rightarrow c\in \mathbb{R},$
\item  $\CO{D \Phi(u_n)}{ u_n-v}+ \Psi(v)- \Psi(u_n) \geq -\E_n\|v- u_n\|, \qquad \forall v\in V,$
\end{itemize}
where $\E_n \rightarrow 0$, then $\{u_n\}$ possesses a convergent subsequence.
\end{definition}

The following theorem is  due to  A. Szulkin  \cite{szulkin}. 
\begin{thm}   \label{MPT} \cite{szulkin}
(Mountain Pass Theorem).  Suppose that
$I : V \rightarrow (-\infty, +\infty ]$ is of the form (\ref{form}) and satisfies the Palais-Smale   condition and  the Moutaint Pass Geometry (MPG):
\begin{enumerate}
\item $I(0)= 0$.
\item  There exists $e\in V$ such that $I(e)\leq 0$.
\item There exists some $\rho$ such that $0<\rho<\|e\|$ and for every $u\in V$ with $\|u\|= \rho$ one has $I(u)>0$.
\end{enumerate}
Then $I$ has a critical value $c>0$ which is  characterized by
$$c= \inf_{\gamma\in \Gamma}  \sup_{t\in [0,1]} I[\gamma(t)],$$
where   $\Gamma= \{\gamma\in C([0,1],V): \gamma(0)=0,\gamma(1)= e\}.$
\end{thm}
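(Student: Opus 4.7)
The plan is to adapt the classical Ambrosetti--Rabinowitz mountain pass argument to the lower semi-continuous setting by replacing the smooth pseudo-gradient deformation with Ekeland's variational principle on the space of paths together with a convex-combination perturbation tailored to the structure $I = \Psi - \Phi$.

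First, I would check that the mountain pass level is strictly positive. Any $\gamma \in \Gamma$ is continuous with $\|\gamma(0)\| = 0 < \rho < \|e\|$, so by the intermediate value theorem the path meets the sphere $\{\|u\| = \rho\}$. Hypothesis 3 then forces $\sup_{t} I(\gamma(t)) \geq \inf_{\|u\|=\rho} I(u) =: \alpha > 0$, so $c \geq \alpha > 0$. Next, I would apply Ekeland's variational principle on the complete metric space $(\Gamma, d_\infty)$ (with $d_\infty(\gamma,\tilde\gamma) = \max_t \|\gamma(t)-\tilde\gamma(t)\|$) to the functional $J(\gamma) := \max_{t\in[0,1]} I(\gamma(t))$, which is lower semi-continuous and bounded below on $\Gamma$. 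This produces a sequence $\gamma_n \in \Gamma$ with $J(\gamma_n) \to c$ and $J(\gamma_n) \leq J(\gamma) + \tfrac{1}{n} d_\infty(\gamma, \gamma_n)$ for every $\gamma \in \Gamma$.

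The main obstacle, and the step where Szulkin's setting really diverges from the smooth case, is to extract from each $\gamma_n$ a point $u_n$ that lies close to the peak set $M_n := \{ t : I(\gamma_n(t)) \geq J(\gamma_n) - 1/n \}$ and satisfies the approximate critical point inequality of Definition \ref{psc}. I would argue by contradiction: if at every such $u$ there exists $w \in V$ with
\[
\langle D\Phi(u), u - w\rangle + \Psi(w) - \Psi(u) < -\eta \|w - u\|
\]
for some fixed $\eta > 0$, then one builds a locally finite selection $t \mapsto w(t)$ (using the continuity of $D\Phi$, the lower semi-continuity of $\Psi$, and a partition-of-unity construction on a neighborhood of $M_n$) and defines a perturbed path by convex combination $\tilde\gamma(t) := (1-\sigma(t))\gamma_n(t) + \sigma(t) w(t)$, with $\sigma$ a small bump supported near $M_n$. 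Convexity of $\Psi$ controls $\Psi(\tilde\gamma(t))$ by the values $\Psi(\gamma_n(t))$ and $\Psi(w(t))$, while $C^1$ regularity of $\Phi$ gives a linear expansion of $\Phi(\tilde\gamma(t))$. A straightforward estimate then produces $J(\tilde\gamma) \leq J(\gamma_n) - \tfrac{\eta}{2} d_\infty(\tilde\gamma,\gamma_n)$, contradicting the Ekeland inequality once $n$ is large enough. Hence the desired $u_n$ exists.

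Finally, the sequence $(u_n)$ satisfies $I(u_n) \to c$ together with the Palais--Smale inequality of Definition \ref{psc}, so (PS) yields a subsequence $u_n \to u_0$. Passing to the limit in
\[
\langle D\Phi(u_n), u_n - v\rangle + \Psi(v) - \Psi(u_n) \geq -\varepsilon_n \|v - u_n\|
\]
using continuity of $D\Phi$ and lower semi-continuity of $\Psi$ shows that $u_0$ is a critical point in the sense of Definition \ref{cp}; continuity of $\Phi$ and the obtained convergence in $\Psi$-value (which follows from the inequality with $v = u_0$) identify $I(u_0) = c > 0$. I expect the convex-combination perturbation step to be the delicate one, since it requires coordinating the choice of $w(t)$ continuously in $t$ while preserving the Ekeland-type decrease of $J$.
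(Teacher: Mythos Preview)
The paper does not prove this statement: Theorem~\ref{MPT} is quoted from \cite{szulkin} and invoked as a black box, so there is no in-paper argument to compare your proposal against.

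Your outline is a correct route to the result and is close in spirit to Szulkin's original argument, which likewise hinges on Ekeland's principle combined with a convex-interpolation construction exploiting the decomposition $I=\Psi-\Phi$. Two technical points deserve more care than your sketch gives them. First, the ``fixed $\eta>0$'' in the contradiction hypothesis is not free: negating the existence of a (PS) sequence at level $c$ only furnishes, for each relevant $u$, \emph{some} $w_u$ and \emph{some} $\eta_u>0$; extracting a uniform $\eta$ over a neighborhood of the peak set requires a normalization (for instance $\|w-u\|\le 1$) together with the continuity of $D\Phi$ and the lower semi-continuity of $\Psi$. Second, since $t\mapsto I(\gamma_n(t))$ is merely lower semi-continuous, the near-peak set $M_n$ need not be closed and the supremum need not be attained; the locally finite selection must therefore be built on an open neighborhood rather than on $M_n$ itself, and one must verify that the perturbed path remains in $\Gamma$ (which holds because $0,1\notin M_n$ once $J(\gamma_n)$ is close to $c>0$). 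With these refinements the argument goes through.
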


 \section{A variational approach towards the super-critical problem}

In this section, we first prove an adapted version of Theorem \ref{vp} applicable specifically to our problem and then we proceed with  the proof of our main results. For the first subsection our energy functionals are explicit but we leave $K$ somewhat arbitrary,  which could allow one to consider some other related problems.\\  
 \subsection{The elliptic problem on a general $K$} 
Consider the Banach space  $V= H_0^1(\Omega)\cap L^p(\Omega)$  equipped  with the following norm
\[\|u\|_V= \|u\|_{H_0^1(\Omega)}+\|u\|_{L^{p}(\Omega)},\]
and 
note that the duality pairing between $V$ and its dual $V^*$ is defined by
\[\langle u, u^*\rangle=\int_\Omega u(x) u^*(x)\, dx,\qquad \forall u\in V, \, \forall u^* \in V^*.\]
Our plan is 
 to apply Theorem \ref{vp} to  the Euler- Lagrange functional corresponding to problem (\ref{eq}), i.e., 

$$E(u):= \frac{1}{2}\int_{\Omega} |\nabla u|^2 dx- \frac{1}{p}\int_{\Omega}  a(x)|u|^p dx,$$
over a closed convex  set $K.$
To adapt Theorem \ref{vp} to our case, define $\Psi: V \to \R$ and $\Phi: V \to \R$ by 
\[\Psi(u)=\frac{1}{2}\int_{\Omega} |\nabla u|^2 dx,\]
and 
\[\Phi(u)=  \frac{1}{p}\int_{\Omega}  a(x)|u|^p dx. \]

We remark that even though $\Phi$ is not even well-defined on  $H_0^1(\Omega)$ for large $p$, but it is continuously differentiable on the space $V= H_0^1(\Omega)\cap L^p(\Omega)$.
Finally, let us introduce the  functional $E_K(u): V\rightarrow (-\infty, +\infty]$ defined by
\begin{equation}\label{E}
E_K(u):= \Psi_K(u)- \Phi(u)
\end{equation}
where
\begin{eqnarray}
\Psi_K(u)=\left\{
  \begin{array}{ll}
      \Psi(u), & u \in K, \\
    +\infty, & u \not \in K.
  \end{array}
\right.
\end{eqnarray}
Note that $E_K$  is indeed the Euler-Lagrange functional corresponding to (\ref{eq}) restricted to $K$. Here is an adapted  version of Theorem \ref{vp} applicable to our case.

\begin{prop}\label{prn}
Let   $V= H_0^1(\Omega)\cap  L^p(\Omega)$ and let $K$ be a  convex and closed subset of $V$. Suppose the following two assertions hold:
\begin{enumerate}
\item[$(i)$] The functional $E_K: V \rightarrow \mathbb{R} \cup \{+\infty\}$ defined in $(\ref{E})$  has a critical point $\bar u\in V$ as in Definition \ref{cp}, and; 

\item[ $(ii)$] (Pointwise invariance property) There exists $\bar{v}\in K$   such that \[-\Delta \bar{v}= D\Phi(\bar{u})= a(x)|\bar{u}|^{p-2}\bar{u},\] in the weak sense, i.e.,
\[\int_\Omega \nabla \bar v \cdot \nabla\eta \, dx  =\int_\Omega a(x)|\bar{u}|^{p-2}\bar{u}\eta\, dx, \qquad \forall \eta \in V.\]
\end{enumerate} 
 Then $\bar{u} \in K$ is  a weak solution of the equation 
\begin{equation}\label{eqthe}
\left\{\begin{array}{ll}
-\Delta u=  a(x)|u|^{p-2}u, &   x\in \Omega \\
u= 0, &    x\in  \partial \Omega.
\end {array}\right.
\end{equation}
\end{prop}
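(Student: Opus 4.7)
The proposition is the concrete form for our PDE of the abstract Szulkin-type principle Theorem \ref{vp}, so the strategy is to unpack the two hypotheses and combine them via a ``complete the square'' identity. The plan is to show that the auxiliary function $\bar v$ produced by the pointwise invariance property must actually coincide with the critical point $\bar u$; since $\bar v$ already solves the PDE in the weak sense by construction, this immediately delivers the desired conclusion.

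Concretely, I would first rewrite hypothesis $(i)$. Because $\Psi_K(v)=+\infty$ outside $K$, the inequality in Definition \ref{cp} is nontrivial only for $v\in K$, and for such $v$ it becomes
\begin{equation*}
\int_\Omega a(x)|\bar u|^{p-2}\bar u\,(\bar u-v)\,dx+\tfrac12\int_\Omega|\nabla v|^2\,dx-\tfrac12\int_\Omega|\nabla\bar u|^2\,dx\ \ge\ 0\qquad\text{for every }v\in K.
\end{equation*}
Next I would use hypothesis $(ii)$. Taking the admissible test function $\eta=\bar u-v\in V$ in the weak formulation for $\bar v$ gives
\begin{equation*}
\int_\Omega \nabla\bar v\cdot\nabla(\bar u-v)\,dx=\int_\Omega a(x)|\bar u|^{p-2}\bar u\,(\bar u-v)\,dx,
\end{equation*}
which lets me eliminate the nonlinear term in the Szulkin inequality.

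The key step is then to insert the specific choice $v=\bar v\in K$ into the resulting inequality. Substituting and expanding yields
\begin{equation*}
\int_\Omega \nabla\bar v\cdot\nabla\bar u\,dx-\tfrac12\int_\Omega|\nabla\bar v|^2\,dx-\tfrac12\int_\Omega|\nabla\bar u|^2\,dx\ \ge\ 0,
\end{equation*}
i.e.\ $-\tfrac12\int_\Omega|\nabla(\bar u-\bar v)|^2\,dx\ge 0$. Since both $\bar u$ and $\bar v$ lie in $H_0^1(\Omega)$, this forces $\bar u=\bar v$ in $V$, and feeding this identity back into the weak equation from $(ii)$ shows that $\bar u\in K$ is a weak solution of \eqref{eqthe}.

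There is no real obstacle; the only subtlety is administrative. I would briefly justify that $\Phi$ is $C^1$ on $V=H_0^1\cap L^p$ with $D\Phi(u)=a(x)|u|^{p-2}u\in V^*$ (so that the duality pairing $\langle D\Phi(\bar u),\bar u-v\rangle$ really equals the integral above for every $v\in V$), and check that $\bar u-v\in V$ is an admissible test function in the weak formulation of $(ii)$. Once these routine points are verified the argument is a three-line computation, and nowhere do I need any growth restriction on $p$, which is precisely the advantage of this variational framework in the supercritical regime.
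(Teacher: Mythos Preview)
Your proposal is correct and follows essentially the same route as the paper: specialize the Szulkin inequality to the test function $v=\bar v\in K$, replace the nonlinear term via the weak equation for $\bar v$, and complete the square to obtain $\tfrac12\int_\Omega|\nabla(\bar u-\bar v)|^2\,dx\le 0$, whence $\bar u=\bar v$ solves \eqref{eqthe}. The only cosmetic difference is that you substitute the weak equation for general $v$ before specializing, whereas the paper sets $v=\bar v$ first and then invokes the weak equation; the computations are otherwise identical.
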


\begin{proof} Since $\bar{u}$ is a critical point of $E$,   it follows from Definition \ref{cp}   that 
\begin{equation}\label{p1}
 \CO{D \Phi(\bar{u})}{ \bar{u}-v}+ \Psi_K(v)- \Psi_K(\bar{u}) \geq 0, \qquad \forall v\in V.
\end{equation}
On the other hand, by (ii), there exists $\bar v \in K$ satisfying 
\begin{equation}\label{eqpp}
\left\{\begin{array}{ll}
-\Delta \bar  v =  a(x)|\bar{u}|^{p-2}\bar{u}, &   x\in \Omega \\
\bar v= 0, &    x\in  \partial \Omega,
\end {array}\right.
\end{equation}
in the weak sense. 
By setting $v= \bar{v}$ in (\ref{p1}) we obtain that 
\begin{eqnarray*}
\frac{1}{2}\int_{\Omega} |\nabla \bar{v}|^2dx  -\frac{1}{2}\int_{\Omega} |\nabla \bar{u}|^2 dx\notag  \geq  \int_{\Omega}a(x)|\bar{u}|^{p-2}\bar{u}(\bar{v}- \bar{u})\, dx
 = \int_{\Omega}\nabla\bar{v} \cdot \nabla(\bar{v}- \bar{u})\, dx
\end{eqnarray*}
where the last equality  follows from (\ref{eqpp}).
Therefore, 

\begin{equation}\label{p2}
\frac{1}{2}\int_{\Omega} |\nabla\bar{v}- \nabla \bar{u}|^2\, dx\leq 0,
\end{equation}
which implies that $\bar{u}= \bar{v}$. Taking into account that $\bar{u}= \bar{v}$ in (\ref{eqpp}) we have that $\bar u$ is a weak solution of 
(\ref{eqthe}).
\end{proof}

It is worth noting that  the condition $(ii)$ in Proposition \ref{prn} indeed shows that the triple $(\Psi, K, \Phi)$ satisfies the point-wise invariance condition  at $\bar u$ given in Definition \ref{def-invar} . In fact, it corresponds to the case where $G=0$.  This is why Proposition  \ref{prn} is a very particular case of the general Theorem \ref{vp}.

\subsection{The specific setting} 
We shall begin by  providing  some more background on quantities related to domains of double revolution that are essential in this work.   Assume $\Omega$ is a domain of double revolution and $ v$ is a function defined on $ \Omega$ that just depends on $ (s,t)$,  then one has 
\[ \int_\Omega v(x) dx = c(m,n) \int_{\widehat{\Omega}} v(s,t) s^{m-1} t^{n-1} ds dt,\]
where $c(m,n)$ is a positive constant depending on $n$ and $m.$ Note that strictly speaking we are abusing notation here by using the same name; and we will continuously do this in this article. Given a function $v$ defined on $ \Omega$ we will write $ v=v(s,t)$ to indicate that the function has this symmetry.     
Define  \[ H^1_{0,G}:=\left\{ u \in H^1_0(\Omega): gu =u \quad \forall g \in G \right\},\] where $G:=O(m) \times O(n)$ where $ O(k)$ is the orthogonal group in $ \IR^k$ and $ gu(x):=u( g^{-1} x)$. \\

To solve equations on domains of double  revolution one needs to relate the equation to a new one on $ \widehat{\Omega}$ defined in (\ref{omegahat}). Suppose $\Omega$  is a  domain of double revolution and $ f$ has is function defined on $\Omega$ with the same symmetry 
(ie. $ gf(x)= f(g^{-1} x)$ all $ g \in G$). 
Suppose that $u(x)$ solves 
\begin{equation} \label{eq_double_lin}
\left\{\begin{array}{ll}
-\Delta u(x) = f(x) &  \mbox{ in } \Omega, \\
u= 0 &   \mbox{ on } \pOm.
\end {array}\right.
\end{equation} Then $ u=u(s,t)$ and $u$ solves 
\begin{equation} \label{eq_double_lin_st}
-u_{ss}-u_{tt}- \frac{(m-1) u_s}{s}-\frac{(n-1) u_t}{t} = f(s,t)   \mbox{ in } \widehat{\Omega}, 
\end{equation} with $u=0$ on $ (s,t) \in \partial \widehat{\Omega} \backslash ( \{s=0\} \cup \{t=0\} )$.  If $u$ is sufficiently smooth then $ u_s =0 $ on $\partial \widehat{\Omega} \cap \{s=0\}$ and $u_t=0$ on $ \partial \widehat{\Omega} \cap \{t=0\}$ after considering the symmetry properties of $u$.  \\

We now pick our convex set $K$.  We mention that this idea of restricting functions to ones which are monotonic in an angle, to improve compactness, is coming from the work of \cite{Weth_annulus}.
  We define $K$ by 
\begin{equation}\label{cK} K=K(m,n):=\left\{ 0 \le u \in H_{0,G}^1(\Omega):  su_t-tu_s \le 0 \mbox{ a.e. in  } \widehat{\Omega} \right\}, 
\end{equation} and note we can rewrite $K$ as functions $u$ such that if we write $(s,t)$  in terms of polar coordinates we have $u_\theta \le 0$ in $ \widetilde{\Omega}$ defined in (\ref{omegahat+}). 
 In our approach there are generally two crucial ingredients when applying Theorem \ref{vp} to supercritical problems.  Part (i) relies on picking $K$ appropriately so as to gain  some needed compactness in order to prove  existence of a critical point for the nonsmooth functional $E_K=\Psi_K-\Phi.$   In part (ii) we need to be able to prove the needed `point-wise invariance property' to land on a true critical point of $E$ instead of $E_K.$ 


\begin{thm}(Imbeddings for annular  domains) \label{compact_no_mono} Let $\Omega$ denote an annular region in $ \IR^N$ (recall we are always  assuming that $\Omega$ is a domain of double revolution). 

\begin{enumerate}  \item (Imbedding without monotonicity) Suppose $\Omega$ has no monotonicity and  \[ 1 \le p < \min \left\{ \frac{2(n+1)}{n-1},  \frac{2(m+1)}{m-1} \right\}.\] Then 
$ H^1_{0,G}(\Omega) \subset \subset  L^p(\Omega)$ with the obvious interpretation in the case of $m=n=1$.

\item (Imbedding with monotonicity) Suppose $ \Omega$ is an annular domain with monotonicity, $ n \le m$ and   
\[ 1 \le p<  \frac{2(n+1)}{n-1} =\max  \left\{ \frac{2(n+1)}{n-1},  \frac{2(m+1)}{m-1} \right\}.\]
Then $K \subset \subset L^p(\Omega)$  with the obvious interpretation if $n=1$.
\end{enumerate} 
\end{thm}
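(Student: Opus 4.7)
I will reduce both embeddings to standard Sobolev embeddings in $\IR^{n+1}$ and $\IR^{m+1}$ via the double revolution symmetry, using
\[ \int_\Omega |u|^q\,dx = c(m,n)\int_{\widehat\Omega}|u|^q s^{m-1}t^{n-1}\,ds\,dt, \]
and the analogous formula for $|\nabla u|^2$. The key observation is that if $\tilde u_n(s,y):= u(s,|y|)$ with $y\in\IR^n$, then $\tilde u_n$ is an axisymmetric function on a bounded set $V_n\subset \IR^{n+1}$ with $\|\tilde u_n\|^p_{L^p(V_n)} = c_n\int_{\widehat\Omega}|u|^p t^{n-1}\,ds\,dt$ and $\|\nabla\tilde u_n\|^2_{L^2(V_n)}=c_n\int_{\widehat\Omega}(u_s^2+u_t^2)\,t^{n-1}\,ds\,dt$. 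On any piece of $\widehat\Omega$ where $s$ is bounded above and bounded away from $0$, the $H^1_0(\Omega)$-norm controls $\|\tilde u_n\|_{H^1(V_n)}$, and Rellich--Kondrachov in $\IR^{n+1}$ yields compact embedding into $L^p$ for $p<\tfrac{2(n+1)}{n-1}$. The symmetric reduction $\tilde u_m(x,t):=u(|x|,t)$ into $\IR^{m+1}$ yields the threshold $\tfrac{2(m+1)}{m-1}$ on regions where $t$ is bounded below.

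\textbf{Part 1.} Since $\Omega$ is annular, $\widehat\Omega\subset\{s^2+t^2\ge R_{\min}^2\}$ for some $R_{\min}>0$. Fix $c\in(0,R_{\min}/\sqrt 2)$ so that $\widehat\Omega\subset\{s\ge c\}\cup\{t\ge c\}$, and introduce a smooth partition of unity subordinate to this cover. Apply the $\tilde u_n$-reduction on $\{s\ge c\}$ and the $\tilde u_m$-reduction on $\{t\ge c\}$; summing yields compact embedding for every $p<\min\{\tfrac{2(n+1)}{n-1},\tfrac{2(m+1)}{m-1}\}$. In the case $m=n=1$ the weights are trivial and the statement reduces to 2D Rellich--Kondrachov on $\widehat\Omega$, giving $p<\infty$.

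\textbf{Part 2.} Split $\widehat\Omega=\widehat\Omega_+\cup\widehat\Omega_-$ according to $s\ge t$ versus $s\le t$. On $\widehat\Omega_+$ we have $s\ge R_{\min}/\sqrt 2$, so the $\tilde u_n$-reduction yields compact embedding into $L^p(\widehat\Omega_+,\,s^{m-1}t^{n-1}ds\,dt)$ for every $p<\tfrac{2(n+1)}{n-1}$. To bound the $\widehat\Omega_-$-integral by the $\widehat\Omega_+$-integral I use two facts derived from the hypotheses: (i) since $u\in K$, the condition $u_\theta\le 0$ gives the pointwise bound $u(s,t)\le u(t,s)$ whenever $s<t$ (same polar radius $r$, smaller polar angle $\pi/2-\theta$); (ii) the annular-with-monotonicity hypotheses ($g_1$ increasing and $g_2$ decreasing on $(0,\pi/2)$) ensure that the reflection $(s,t)\mapsto(t,s)$ maps $\widehat\Omega_-$ into $\widehat\Omega_+$. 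Changing variables $(s',t')=(t,s)$ then gives
\[ \int_{\widehat\Omega_-} u^p\, s^{m-1} t^{n-1}\,ds\,dt \ \le\ \int_{\widehat\Omega_+} u^p\, t'^{m-1} s'^{n-1}\,ds'\,dt'. \]
On $\widehat\Omega_+$ we have $s'\ge t'$; combined with $m\ge n$, this yields $(s'/t')^{m-n}\ge 1$, i.e.\ $t'^{m-1}s'^{n-1}\le s'^{m-1} t'^{n-1}$. Hence the $\widehat\Omega_-$-integral is bounded by the $\widehat\Omega_+$-integral, and Part 2 follows. Compactness passes through the same chain applied to weakly convergent sequences in $K$.

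\textbf{Main obstacle.} The delicate step is Part 2. The pointwise inequality $u(s,t)\le u(t,s)$ must be derived from the a.e.\ condition $u_\theta\le 0$ via a brief mollification/density argument in $K$, the geometric claim that the reflection sends $\widehat\Omega_-$ into $\widehat\Omega_+$ must be checked from the $g_i$-monotonicities, and, most sharply, the weight comparison $t'^{m-1}s'^{n-1}\le s'^{m-1}t'^{n-1}$ on $\widehat\Omega_+$ requires the hypothesis $n\le m$. This last ingredient is exactly what lets the monotonicity improve the threshold from $\min\{\tfrac{2(n+1)}{n-1},\tfrac{2(m+1)}{m-1}\}$ to $\max\{\tfrac{2(n+1)}{n-1},\tfrac{2(m+1)}{m-1}\} = \tfrac{2(n+1)}{n-1}$.
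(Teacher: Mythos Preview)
Your Part 1 is essentially the paper's argument: both reduce to the standard Sobolev/Rellich embedding in $\IR^{n+1}$ (resp.\ $\IR^{m+1}$) after localizing away from $\{s=0\}$ (resp.\ $\{t=0\}$), and then patch the two pieces using that $(0,0)\notin\overline{\widehat\Omega}$.

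Your Part 2, however, takes a genuinely different route from the paper. The paper works in polar coordinates and uses a \emph{translation} in the angle: for $\theta\in[\pi/3,\pi/2]$ it bounds $\sin^{n-1}(\theta)\le c^{n-1}\sin^{n-1}(\theta-\tfrac{\pi}{4})$, $\cos^{m-1}(\theta)\le\cos^{m-1}(\theta-\tfrac{\pi}{4})$, $u(r,\theta)\le u(r,\theta-\tfrac{\pi}{4})$, and $[g_1(\theta),g_2(\theta)]\subset[g_1(\theta-\tfrac{\pi}{4}),g_2(\theta-\tfrac{\pi}{4})]$, thereby controlling the integral over $\theta\in[\pi/3,\pi/2]$ by that over $[\pi/12,\pi/4]$ and reducing to the region $\{s\ge\beta\}$. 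You instead use the \emph{reflection} $\theta\mapsto\tfrac{\pi}{2}-\theta$ (equivalently $(s,t)\mapsto(t,s)$), which maps $\widehat\Omega_-$ into $\widehat\Omega_+$ by the $g_i$-monotonicities, and then compare the weights directly via $(t')^{m-1}(s')^{n-1}\le (s')^{m-1}(t')^{n-1}$ on $\{s'\ge t'\}$. Your argument is cleaner in that no auxiliary constant $c$ appears and the role of the hypothesis $n\le m$ is made completely explicit through the weight inequality; the paper's shift argument hides this dependence (it enters only because the reduction lands on $\{s\ge\beta\}$, forcing the $\IR^{n+1}$ embedding). One small point: your final sentence ``compactness passes through the same chain'' is ambiguous, since the reflection bound does not apply to differences $u_k-u$ (which need not lie in $K$); the clean way---which the paper also uses---is to first obtain the \emph{continuous} embedding $K\hookrightarrow L^q$ for every $q<\tfrac{2(n+1)}{n-1}$ and then interpolate with the compact $H^1_0\hookrightarrow L^1$ embedding to get compactness at the given $p$.
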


\begin{proof}  1.     First note that to prove the desired result it is sufficient to prove  $ \| u\|_{L^p(\Omega)} \le C \| u\|_{L^2(\Omega)} + C \| \nabla u\|_{L^2(\Omega)}$ for all $ u \in H^1_{0,G}(\Omega)$.   Take $ T>0$ large such that $ \widehat{\Omega} \subset (0,T)^2=:Q$ 
and given a function $u \in H^1_{0,G}(\Omega)$ (hence $ u=u(s,t) $ for $ (s,t) \in \widehat{\Omega}$) we extend $u$ to all of $Q$ by setting $u$ to be zero outside $ \widehat{\Omega}$.  Fix $ \E>0$ small such that $ (0,\E)^2 \cap \widehat{\Omega} = \emptyset$ and consider $ \gamma_\E(s,t)=\gamma_\E(s)$ denote some smooth cut off function with $ \gamma_\E=0$ for $ s<\E$ and $ \gamma_\E=1$ for $ s>2\E$.  We claim there is some $ C=C(p)$ such that 
\begin{equation} \label{no_s}
\left( \int_Q | \gamma_\E(s) u(s,t)|^p t^{n-1} ds dt \right)^\frac{2}{p} \le C \int_Q (\gamma_\E u)^2 t^{n-1} ds dt + C \int_Q | \nabla_{s,t} (\gamma_\E u)|^2 t^{n-1} ds dt,
\end{equation}
for all $ u \in H^1_{0,G}(\Omega)$ for all $ 2<p<\frac{2(n+1)}{(n+1)-2}$ for $ n \ge 2$ (for $n=1$ one gets the result for all $p<\infty$).  To see the result we are considering the function $ (s,t) \mapsto \gamma_\E(s) u(s,t)$ to be a function $ (s,t) \in (0,T) \times B_T \subset  \IR^{n+1}$ where $B_T$ is the ball of radius $T$ centred at the origin in $ \IR^n$ and the function is radial in $t$ and hence the weight $ t^{n-1}$.  We now need reinsert the weight $ s^{m-1}$ into the inequality.   Because of the cut off $ \gamma_\E$ there is some $ C=C(\E,p)$ such that the right hand side of (\ref{no_s}) is bounded above by
 \[C \int_Q (\gamma_\E u)^2 t^{n-1} s^{m-1} ds dt + C \int_Q | \nabla_{s,t} (\gamma_\E u)|^2 t^{n-1} s^{m-1} ds dt,\] and the left hand side of (\ref{no_s}) is an upper bound for the same term with the added weight $ s^{m-1}$ and hence we arrive at 
\begin{equation} \label{no_s_weight}
\left( \int_Q | \gamma_\E(s) u(s,t)|^p d \mu\right)^\frac{2}{p} \le C \int_Q (\gamma_\E u)^2 d \mu  + C \int_Q | \nabla_{s,t} (\gamma_\E u)|^2 d \mu,
\end{equation} where $ d \mu(s,t)=s^{m-1} t^{n-1} ds dt$.    Note we can bound the right hand side above by $ C_{1,\E} \int_\Omega | \nabla u(x)|^2 dx$ for some constant $ C_{1,\E}$ independent of $u$ after considering the $L^2(\Omega)$ Poincare inequality.     One can similarly obtain 
\[ \left( \int_Q | \gamma_\E(t) u(s,t)|^p d \mu \right)^\frac{2}{p} \le C \int_\Omega | \nabla u(x)|^2dx,\] for all $ u \in H^1_{0,G}(\Omega)$ where we are again extending $u$ and here $\gamma_\E$ is a cut off in the $t$ direction,  and where $ 2<p < \frac{2(m+1)}{(m+1)-2}$.    Since $ (0,0) \notin \overline{ \widehat{\Omega}}$ we can combine the results to see we have a continuous imbedding for 
\[ 2<p<\min \left\{ \frac{2(n+1)}{(n+1)-2},  \frac{2(m+1)}{(m+1)-2} \right\}.\]  To obtain the compact imbedding one uses an $L^p$ interpolation inequality along with compactness in, for instance, $L^1(\Omega)$. \\

\noindent  2.  We assume that $1\leq n\leq m$ and $p<\frac{2(n+1)}{(n-1)}$.  By using the polar coordinates $s=r \cos \theta$ and $t=r \sin \theta$ we have that 

\begin{eqnarray}
\int_{\widehat{\Omega}}  u(s,t)^p s^{m-1} t^{n-1} ds dt=\int_{0}^{\pi/2}\int_{g_1}^{g_2} r^{m-1}\cos^{m-1}(\theta) r^{n-1} \sin^{n-1}(\theta) u(r,\theta)^pr\, dr \, d \theta.
\end{eqnarray} 
For $\theta \in [\pi/3, \pi/2]$ we have that
$\sin(\theta) \leq c \sin(\theta-\pi/4)$ for some constant $c>0$.  Thus, considering the monotonicity properties of $g_1$, $g_2$  and $\theta \mapsto u(r, \theta)$ we obtain that 
\begin{eqnarray*}
\int_{\pi/3}^{\pi/2}\int_{g_1(\theta)}^{g_2(\theta)} r^{m-1}\cos^{m-1}(\theta) r^{n-1} \sin^{n-1}(\theta) u(r,\theta)^pr\, dr \, d \theta\\ \leq c^{n-1}\int_{\pi/3}^{\pi/2}\int_{g_1(\theta-\pi/4) }^{g_2(\theta-\pi/4) } r^{m-1}\cos^{m-1}(\theta-\pi/4) r^{n-1} \sin^{n-1}(\theta-\pi/4) u(r,\theta-\pi/4)^pr\, dr \, d \theta\\
=c^{n-1}\int_{\pi/12}^{\pi/4}\int_{g_1(\theta)}^{g_2(\theta)} r^{m-1}\cos^{m-1}(\theta) r^{n-1} \sin^{n-1}(\theta) u(r,\theta)^pr\, dr \, d \theta.
\end{eqnarray*}
Thus,  there is a constant $C_1>0$ such that 
\begin{eqnarray*}
\int_{0}^{\pi/2}\int_{g_1}^{g_2} r^{m-1}\cos^{m-1}(\theta) r^{n-1} \sin^{n-1}(\theta) u(r,\theta)^pr\, dr \, d \theta\\ \leq C_1 \int_{0}^{\pi/3}\int_{g_1}^{g_2} r^{m-1}\cos^{m-1}(\theta) r^{n-1} \sin^{n-1}(\theta) u(r,\theta)^pr\, dr \, d \theta
\end{eqnarray*}
On the other hand,
\begin{eqnarray*}
 \int_{0}^{\pi/3}\int_{g_1}^{g_2} r^{m-1}\cos^{m-1}(\theta) r^{n-1} \sin^{n-1}(\theta) u(r,\theta)^pr\, dr \, d \theta=\int_{\{\widehat \Omega, \,\, s\geq \beta\}}  u(s,t)^p s^{m-1} t^{n-1} ds dt
\end{eqnarray*}
for some positive constant $\beta$. Therefore, 
\begin{eqnarray*}
\left (\int_{\{\widehat{\Omega}, \,\, s\geq \beta\}}  u(s,t)^p s^{m-1} t^{n-1} ds dt\right)^{2/p} \leq  C_2 \left (\int_{\{{\widehat{\Omega}}, \,\, s\geq \beta\}}  u(s,t)^p  t^{n-1} ds dt\right)^{2/p}
\end{eqnarray*}
Thus, by part 1),
\begin{eqnarray*}
 \left (\int_{\{\widehat{\Omega}, \,\, s\geq \beta\}}  u(s,t)^p  t^{n-1} ds dt\right)^{2/p} &\leq & C_3\int_{\{{\widehat{\Omega}}, \,\, s\geq \beta\}}  (u^2+u_s^2+u_t^2)  t^{n-1} ds dt\\ & \leq & C_4 \int_{\{{\widehat{\Omega}}, \,\, s\geq \beta\}}  (u^2+u_s^2+u_t^2)  t^{n-1} s^{m-1} ds dt \\ &\leq & C_4 \int_{{\widehat{\Omega}}}  (u^2+u_s^2+u_t^2)  t^{n-1} s^{m-1} ds dt=
C_5\|u\|^2_{H^1(\Omega)}.
\end{eqnarray*}
\end{proof}

 We remark that if $N$ is even and $ m=n=\frac{N}{2}$  then monotonicity does not improve the imbedding.  This is not surprising when one examines the proof of the improved imbedding with monotonicity.  

\begin{coro} \label{torus_imbed} (Imbeddings for toroidal domains) Let $\Omega$ denote an toroidal domain in $ \IR^N$ (recall we are always  assuming that $\Omega$ is a domain of double revolution). Then $H_{0,G}^1(\Omega) \subset \subset L^p(\Omega)$ for $ 1 \le p < \frac{2(n+1)}{n-1}$.
\end{coro}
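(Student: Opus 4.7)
The key geometric feature of a toroidal domain that makes this corollary possible is that $\widehat{\Omega}$ is bounded away from the $t$-axis: there exist $0<\beta\le M$ with $\beta\le s\le M$ for all $(s,t)\in\widehat{\Omega}$. My plan is to exploit this to reduce the compactness claim to the classical Rellich--Kondrachov embedding in $\IR^{n+1}$, mirroring the portion of the proof of Theorem~\ref{compact_no_mono}(1) that yields the exponent $\tfrac{2(n+1)}{n-1}$. In contrast to the annular case, the cutoff $\gamma_\varepsilon(t)$ used to extract the complementary exponent $\tfrac{2(m+1)}{m-1}$ is unavailable here because $\widehat{\Omega}$ meets $\{t=0\}$, which explains why only the $\tfrac{2(n+1)}{n-1}$-bound appears in the statement.

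First, I would discard the $s$-weight. Since $\beta^{m-1}\le s^{m-1}\le M^{m-1}$ on $\widehat{\Omega}$, the identity
\[
\int_\Omega |u|^p\,dx = c(m,n)\int_{\widehat{\Omega}} |u|^p\, s^{m-1}t^{n-1}\,ds\,dt,
\]
together with its $H^1$ analogue, allows $s^{m-1}$ to be absorbed into multiplicative constants; thus it suffices to establish a compact embedding between the corresponding weighted spaces on $\widehat{\Omega}$ with measure $t^{n-1}\,ds\,dt$. Next, I would lift to $\IR^{n+1}$ by introducing the bounded open set
\[
\widetilde{\Omega}^\ast := \mathrm{int}\bigl\{(s,y)\in\IR\times\IR^n : (s,|y|)\in\overline{\widehat{\Omega}}\bigr\}\subset [\beta,M]\times B_M(0),
\]
and defining $U(s,y):=u(s,|y|)$. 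The $O(n)$-invariance of $u$ in the last $n$ variables ensures $U\in H^1(\widetilde{\Omega}^\ast)$, and polar integration in $y$ yields
\[
\int_{\widetilde{\Omega}^\ast} |U|^p\,ds\,dy = \omega_{n-1}\int_{\widehat{\Omega}} |u|^p\, t^{n-1}\,ds\,dt,
\]
together with the analogous identity for the $H^1$ norm. Since $\partial\widetilde{\Omega}^\ast$ lifts precisely from the curves $r=g_i(\theta)\subset\partial\Omega$, on which $u=0$, we obtain $U\in H_0^1(\widetilde{\Omega}^\ast)$; the classical Rellich--Kondrachov theorem then gives $H_0^1(\widetilde{\Omega}^\ast)\subset\subset L^p(\widetilde{\Omega}^\ast)$ for $1\le p<\tfrac{2(n+1)}{n-1}$ (and for all $p<\infty$ when $n=1$), which pulls back through the weighted identities to the claimed embedding.

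The main delicate point is the identification of $\partial\widetilde{\Omega}^\ast$ inside $\IR^{n+1}$ and the vanishing of the trace of $U$ there. The subtlety is that the segment $\{t=0\}\cap\overline{\widehat{\Omega}}$---where $u$ need not vanish---lifts to the axis $\{y=0\}\cap\widetilde{\Omega}^\ast$, which I must argue lies in the \emph{interior} of $\widetilde{\Omega}^\ast$ (thanks to the $O(n)$-symmetry effectively filling in this lower-dimensional stratum), so that it contributes no trace obstruction. Once this is handled, chaining the three steps yields the compact embedding $H_{0,G}^1(\Omega)\subset\subset L^p(\Omega)$.
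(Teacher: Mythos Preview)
Your proposal is correct and follows essentially the same route as the paper: the paper's proof simply says to repeat the argument from part~1 of Theorem~\ref{compact_no_mono} while observing that, since $s$ is bounded away from zero on $\widehat{\Omega}$ for a toroidal domain, the weight $s^{m-1}$ can be absorbed into constants and no cutoff $\gamma_\varepsilon(s)$ is needed. Your more detailed treatment of the lifted domain $\widetilde{\Omega}^\ast\subset\IR^{n+1}$ and the trace issue at $\{y=0\}$ makes explicit what the paper leaves implicit in its one-line reference back to that proof.
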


\begin{proof} This result follows by using the proof of part 1 of Theorem  \ref{compact_no_mono} and noting that the term $ s^{m-1}$ does not play a role since $ s$ is bounded away from zero on $ \widehat{\Omega}$. 
\end{proof}

In the next lemma we discuss the existence of a critical point for the  the non-smooth functional $E_K$ by means of Theorem \ref{MPT}.
\begin{lemma}\label{PS0}
The functional $E_K$ defined in (\ref{E}) satisfies the mountain pass geometry and  (PS) compactness condition (Definition \ref{psc}).
\end{lemma}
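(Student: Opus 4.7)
The plan is to verify the two ingredients needed to apply the nonsmooth mountain pass theorem (Theorem \ref{MPT}) to $E_K = \Psi_K - \Phi$, exploiting throughout that $K$ is a convex cone and that the compact embedding from Theorem \ref{compact_no_mono} (or Corollary \ref{torus_imbed} in the toroidal case) supplies a Sobolev-type estimate $\|u\|_{L^p(\Omega)} \le C\|u\|_{H^1_0(\Omega)}$ for every $u \in K$, so in particular the norms $\|\cdot\|_V$ and $\|\cdot\|_{H^1_0}$ are equivalent on $K$.

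For the mountain pass geometry, item (1) is immediate since $0 \in K$ gives $E_K(0) = 0$. For (2), I would choose a nonzero, smooth, purely radial function $\phi \in K \cap C_c^\infty(\Omega)$ with $\int_\Omega a \phi^p > 0$ (any radial function lies in $K$ since $s\phi_t - t\phi_s = 0$), and then superlinearity $p > 2$ together with $K$ being a cone yields
\[
E_K(T\phi) = \tfrac{T^2}{2}\|\nabla \phi\|_2^2 - \tfrac{T^p}{p}\int_\Omega a\phi^p \to -\infty \quad \text{as } T\to\infty,
\]
so $e = T_0 \phi$ works for large $T_0$. For (3), the embedding gives $\Phi(u) \le C\|u\|_{H^1_0}^p$ for $u \in K$, hence $E_K(u) \ge \tfrac{1}{2}\|\nabla u\|_2^2 - C\|u\|_{H^1_0}^p$ which is strictly positive on a sufficiently small sphere in $V$.

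For the Palais-Smale condition, let $(u_n) \subset K$ satisfy $E_K(u_n) \to c$ and the nonsmooth PS inequality. The key step is boundedness. Here the cone structure of $K$ is decisive: substituting $v = (1+t)u_n \in K$ into the PS inequality, dividing by $t > 0$, and letting $t \to 0^+$ yields
\[
\|\nabla u_n\|_2^2 - \int_\Omega a|u_n|^p \ge -\epsilon_n \|u_n\|.
\]
Combining this with $E_K(u_n) \to c$ gives $(\tfrac{1}{2} - \tfrac{1}{p})\|\nabla u_n\|_2^2 \le c + o(1) + \tfrac{\epsilon_n}{p}\|u_n\|$, and since $p > 2$ and $\|u_n\|_V \le C\|\nabla u_n\|_2$ on $K$, it follows that $(u_n)$ is bounded in $V$. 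Passing to a subsequence, $u_n \rightharpoonup u$ weakly in $V$; since $K$ is convex and strongly closed in $H^1_0(\Omega)$ it is weakly closed, so $u \in K$. The compact embedding then yields $u_n \to u$ strongly in $L^p(\Omega)$.

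To upgrade to strong convergence in $V$, I would test the PS inequality with $v = u \in K$ to obtain
\[
\int_\Omega a|u_n|^{p-2} u_n (u_n - u)\, dx + \tfrac{1}{2}\|\nabla u\|_2^2 - \tfrac{1}{2}\|\nabla u_n\|_2^2 \ge -\epsilon_n \|u - u_n\|.
\]
The integral vanishes by Hölder and strong $L^p$ convergence, so $\limsup \|\nabla u_n\|_2^2 \le \|\nabla u\|_2^2$; combined with weak lower semi-continuity this gives $\|\nabla u_n\|_2 \to \|\nabla u\|_2$, and the Hilbert space structure of $H^1_0$ then upgrades weak to strong convergence. Equivalence of norms on $K$ transfers this to $V$. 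The main obstacle is the boundedness step, where the absence of a classical derivative of $\Psi_K$ prohibits the usual trick of computing $\langle dI(u_n), u_n\rangle$; one must instead extract the right one-sided directional inequality by inserting the cone-compatible variation $v = (1+t)u_n$ and passing to the limit $t \to 0^+$.
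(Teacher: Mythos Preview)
Your proposal is correct and follows essentially the same route as the paper: both exploit that $K$ is a convex cone on which $\|\cdot\|_V$ and $\|\cdot\|_{H^1_0}$ are equivalent (via Theorem~\ref{compact_no_mono}), verify the mountain pass geometry from this, and for Palais--Smale insert a scalar multiple $v=r u_n\in K$ into the PS inequality to extract boundedness, then test with $v=\bar u$ to turn weak into strong convergence. The only cosmetic difference is that the paper plugs in a fixed scalar $r=1+\tfrac{1}{p}$ and adds the resulting inequality to $E_K(u_n)\le c+1$, whereas you take $v=(1+t)u_n$ and let $t\to 0^+$ to obtain the cleaner one-sided estimate $\|\nabla u_n\|_2^2-\int_\Omega a|u_n|^p\ge -\varepsilon_n\|u_n\|$; your version is arguably more transparent and works uniformly for all $p>2$.
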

\begin{proof}
Note first that  that $K$ is a weakly closed convex cone in $H_0^1(\Omega)$.
It follows from Theorem \ref{compact_no_mono}  that there exists a constant $C$ such that 
\begin{equation}\label{equiv}\|u\|_{H^1}\leq \|u\|_V \leq C \|u\|_{H^1}, \qquad \forall u \in K. \end{equation}

Both the mountain pass geometry and  (PS) compactness condition follow from the standard arguments together with inequality (\ref{equiv}). Here,  for the conveience of the reader,  we sketch the proof for the (PS) compactness condition.
Suppose that $\{u_n\}$ is a sequence in $K$ such that $E(u_n)\rightarrow c\in \mathbb{R}$, $\E_n \rightarrow 0$ and
\begin{equation}\label{od}
 \Psi_K(v)- \Psi_K(u_n)+ \langle D\Phi(u_n), u_n-v \rangle\geq -\E_n \|v- u_n\|_V, \qquad \forall v\in V.
\end{equation}
We must show that $\{u_n\}$ has a convergent subsequence in $V$.  Firstly,  we prove that $\{u_n\}$ is bounded in $V$. 
Note that 
   since $E(u_n)\rightarrow c$, then  for large values of $n$ we have 
\begin{equation}\label{od0}
 \frac{1}{2}\| u_n\|^2_{H^1}- \frac{1}{p}\int_{\Omega}  a(x)|u|^p dx\leq c+1.
\end{equation}
Note that 
\[\langle D\varphi(u_n),  u_n\rangle = \int_{\Omega} a(x) u_n(x)^{p} dx.\]
Thus, by setting  $v= ru_n$ in (\ref{od}) with $r=1+1/p$ we get 
\begin{equation}\label{od1}
\frac{(1-r^2)}{2} \|u_n\|^2_{H^1}+ (r-1)\int_{\Omega}a(x)|u_n|^p dx\leq \E_n (r-1) \| u_n\|_V.
\end{equation}
Adding up  (\ref{od1}) and  (\ref{od0}) yields that 
\begin{equation*}
 \|u_n\|^2_{H^1}\leq C_0(1+  \| u_n\|_{V}),
\end{equation*}
for some constant $C_0>0.$
Therefore, by considering (\ref{equiv}),   $\{u_n\}$ is bounded in $H^1.$  Using standard results in Sobolev spaces, after passing to a subsequence if necessary, there exists $\bar u \in H^1$ such that  $u_n\rightharpoonup \bar{u}$ weakly in $H^1$,   $u_n\rightarrow  \bar{u}$ a.e..  Also according to Theorem \ref{compact_no_mono},  from boundedness of $\{u_n\}\subset K$  in $H^1$, one can deduce that  the strong  convergence of $u_n$ to $\bar u$ in  $L^p.$ 
Now in (\ref{od}) set $v= \bar{u}$:
\begin{equation}\label{oder1}
\frac{1}{2} ( \|\bar{u}\|^2_{H^1} - \|u_n\|^2_{H^1}) +\int_{\Omega}a(x)|u_n|^{p-1} ( u_n- \bar{u}) dx
\geq -\E_n \|u_n- \bar{u}\|_V.
 \end{equation}
Therefore, it follows from  (\ref{oder1})  that 
\[
\frac{1}{2} (      \limsup_{n\rightarrow \infty} \|u_n\|^2_{H^1}  -     \|\bar{u}\|^2_{H^1}  ) \leq 0.
\]
The latter  yields that 
$$u_n\rightarrow \bar{u}\quad \text{strongly in }\quad V$$
as desired.
\end{proof}

We shall now verify that the triple $(\Psi, \Phi, K)$ satisfies the point-wise invariance condition as stated in Definition \ref{def-invar} and more specifically in part $(ii)$ of Proposition 
\ref{prn}.


 \begin{prop} (Pointwise invariance property  on $K$) \label{pointwise_double} Suppose $\Omega$ is an annular domain with monotonicity and $ a$ satisfies $(\mathcal{A})$. Let $ u \in K$ and suppose $ v$ solves
 \begin{equation} \label{linear_new_st}
\left\{\begin{array}{ll}
-\Delta v = a(x) u^{p-1} &  \mbox{ in } \Omega, \\
v= 0 &   \mbox{ on } \pOm.
\end {array}\right.
\end{equation} Then $ v \in K$. 
\end{prop}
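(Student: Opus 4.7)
The plan is to verify the three defining conditions of $K$ for $v$: namely $v \ge 0$, $v \in H^1_{0,G}(\Omega)$, and $sv_t - tv_s \le 0$ a.e.\ in $\widehat{\Omega}$. Nonnegativity is immediate from the maximum principle since $a(x)u^{p-1} \ge 0$; and the $G$-symmetry of $v$ follows from uniqueness for the linear Dirichlet problem on the $G$-invariant domain $\Omega$ with $G$-invariant right-hand side. The heart of the argument is the angular monotonicity.

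To that end, set $L := s\partial_t - t\partial_s$ (so that $Lv = v_\theta$ in polar coordinates in the $(s,t)$-plane), $w := Lv$, and $f := a(x)u^{p-1}$. Denote by $\Delta^* := \partial_s^2 + \partial_t^2 + \frac{m-1}{s}\partial_s + \frac{n-1}{t}\partial_t$ the Laplacian expressed on $G$-invariant functions. A direct calculation shows that the pure second-order part commutes with $L$, while each first-order piece contributes to give the commutator identity
\[
[L, \Delta^*] = -c(s,t)\, L, \qquad c(s,t) := \frac{m-1}{s^2} + \frac{n-1}{t^2} \ge 0.
\]
Applying $L$ to the equation $-\Delta^* v = f$ then yields the linear elliptic equation
\[
-\Delta^* w + c\,w = Lf \quad \text{in } \widehat{\Omega},
\]
with the favourable sign $c \ge 0$ on the zeroth-order coefficient.

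Next I verify that both the source $Lf$ and the boundary values of $w$ are nonpositive. Expanding gives
\[
Lf = (sa_t - ta_s)\,u^{p-1} + (p-1)\,a\,u^{p-2}\,Lu,
\]
and each summand is $\le 0$: the first by assumption $(\mathcal{A})$, the second because $u \in K$ and $a, u \ge 0$. On the axial parts of $\partial\widehat{\Omega}$ (where $s=0$ or $t=0$), the $G$-symmetry of $v$ forces $v_s = 0$ (respectively $v_t = 0$), so $w = 0$ there; on the curved parts $r = g_i(\theta)$, differentiating the boundary identity $v(g_i(\theta),\theta) \equiv 0$ gives $v_\theta = -v_r\,g_i'(\theta)$, and Hopf's lemma (using $v > 0$ in $\Omega$) combined with the monotonicity $g_1' \ge 0$, $g_2' \le 0$ yields $w \le 0$. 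To finish, I test the equation for $w$ against $w^+$, viewed as a $G$-invariant function on $\Omega$: since $w^+$ vanishes on $\partial\Omega$, integration by parts gives
\[
\int_\Omega |\nabla w^+|^2\,dx + \int_\Omega c\,(w^+)^2\,dx = \int_\Omega (Lf)\,w^+\,dx \le 0,
\]
and the nonnegativity of the left-hand side forces $w^+ \equiv 0$, i.e.\ $sv_t - tv_s \le 0$.

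The main obstacle I anticipate is regularity: $u$ lies a priori only in $H^1_{0,G}(\Omega)$, so the pointwise computations above (Hopf on the curved boundary and the vanishing of $v_s$, $v_t$ on the axes) must be justified by an approximation argument, e.g.\ by mollifying $u$ with a $G$-equivariant kernel that preserves the angular monotonicity, solving the corresponding smoother linear problem, establishing $Lv_\varepsilon \le 0$ as above, and passing to the limit in $H^1$. A related technical point is the singularity of $c$ on $\{s=0\} \cup \{t=0\}$; this is harmless because $w$ itself vanishes there to the appropriate order (thanks to the $G$-symmetry), so $c(w^+)^2$ is integrable and the test-function identity is rigorous.
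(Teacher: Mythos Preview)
Your proposal follows essentially the same route as the paper: derive the equation $-\Delta^* w + c\,w = Lf$ for $w = Lv$ via the commutator, check $Lf \le 0$ from $(\mathcal{A})$ and $u \in K$, verify $w \le 0$ on all pieces of $\partial\widehat{\Omega}$, and conclude by testing against the positive part. The differences lie exactly in the two technical points you flag, and the paper's choices there are cleaner than yours. First, instead of mollifying, the paper truncates $u_k := \min\{u,k\}$; this trivially preserves $G$-invariance and the angular monotonicity (which a $G$-equivariant mollifier in $\mathbb{R}^N$ does \emph{not} obviously preserve, so your approximation step has a gap as stated), and the bounded right-hand side yields $v^k \in C^{1,\alpha}(\overline{\Omega})$, making $v^k_s=0$ on $\{s=0\}$ and $v^k_t=0$ on $\{t=0\}$ immediate. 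Second, rather than testing against $(w^k)^+$ and arguing integrability of $c\,(w^+)^2$, the paper tests against $(w^k-\varepsilon)_+$: since $w^k$ is continuous with $w^k=0$ on the axes, this test function is supported away from the singular set $\{s=0\}\cup\{t=0\}$, so the singularity of $c$ never enters and no integrability discussion is needed; one then lets $\varepsilon\to 0$. Finally, the paper passes to the limit $k\to\infty$ by showing $\{v^k\}$ is bounded in $H^1_0$ (via the improved embedding on $K$) and that $\nabla v^k \to \nabla v$ a.e., hence $w^k \to w$ a.e.\ and $w\le 0$.
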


\begin{proof} Fix $ u \in K$ and for $k \ge 1$ we set $u_k(x)= \min\{ u(x), k\}$ and note that $u_k \in H_0^1(\Omega)$.  Recall that $u$ has the symmetry $ u \in H^1_{0,G}(\Omega)$ and the cut off does not effect the symmetry and hence $u_k \in  H^1_{0,G}(\Omega)$.   One should also note that the monotonicity of $u$ is preserved under the cut off;  this may be more apparent if one writes $u$ in the variables $(s,t)$ and then writes this in terms of polar coordinates $ s=r \cos(\theta), t =  r \sin(\theta)$.    So we know consider solving 
\begin{equation} \label{linear_new_cut-off}
\left\{\begin{array}{ll}
-\Delta v = a(x) u_k^{p-1} &  \mbox{ in } \Omega, \\
v= 0 &   \mbox{ on } \pOm.
\end {array}\right.
\end{equation}  
Considering the associated energy on $H^1_{0,G}(\Omega)$ we see the existence of a unique $ 0 \le v^k \in H^1_{0,G}(\Omega) \cap C^{1,\alpha}(\overline{\Omega})$ which solves (\ref{linear_new_st}).  Set $ w^k= sv^k_t - t v^k_s$.  Then a computation shows that
\begin{equation} \label{w_eq_10}
-w^k_{ss}-w^k_{tt} -\frac{(m-1) w^k_s}{s} - \frac{(n-1) w^k_t}{t} + \frac{(m-1) w^k}{s^2} +\frac{(n-1) w^k}{t^2}  = H \quad (s,t) \in \widehat{\Omega}
\end{equation} 
where 
\[H:= u_k^{p-1} (s a_t - t a_s)+ (p-1) a u_k^{p-2} (s (u_k)_t - t (u_k)_s) ,\] and note $H \le 0$ in $ \widehat{\Omega}$.    Note that this problem behaves like a two dimensional problem away from the sets $ \{s=0\}$ and $ \{t=0\}$.    Note that if $v^k$ is sufficiently smooth that by the symmetry of $v^k$ we expect $v_s^k=0$ and $ v_t^k=0$ on  the portions of $ \partial \widehat{\Omega}$ that correspond to $ \{s=0\}$ and $ \{t=0\}$ respectively.   Using this we see  that we should have $ w=0$ on these portions of the boundary.     
Note that if $ m$ and $n \ge 2$ then both these portions of the boundary are dimension less than or equal $N-2$ and hence trying to utilize trace arguments for Sobolev functions may be a delicate issue.  This is  the reason we considered the slightly smoothed version of $ u \in K$ given by $ u_k$.  Since $ v^k=v^k(s,t)$ is the restriction to the first quadrant of $(x_1,x_{m+1})$ plane of an even $ C^{1,\alpha}$ function in $x_1$ and $ x_{m+1}$ we see that $ v_s^k, v_t^k \in C^{0,\alpha}( \overline{\Omega})$. 
This is sufficient regularity for $ v_s^k$ and $v_t^k$ to give the desired boundary conditions on $ \{s=0\}$ and $ \{t=0\}$ portions of $ \partial \widehat{\Omega}$ respectively and hence we see that $ w^k=0$ on $ \partial \widehat{\Omega} \cap \left( \{s=0\} \cup \{t=0\} \right)$. 
Following the computation in \cite{Cabre_double} we have 
\begin{equation} \label{cab_form}
v^k_s= \sum_{i=1}^m v^k_{x_i} \frac{x_i}{s} \quad \mbox{ and } \quad  v_t^k = \sum_{i=m+1}^N v^k_{x_i} \frac{x_i}{t}.
\end{equation} 
 
  By the symmetry and monotonicity assumptions on the domain along with the fact $ v^k \ge 0$ we will be able to show that $ w^k \le 0$ on the remaining portion of $ \partial \widehat{\Omega}$;  we think this is more apparent if the equation is written in polar coordinates and so we will examine this boundary condition after doing that.   Using a computation similar to the above one we see that $H \in L^2(\Omega)$.

In terms of polar coordinates (recall by this we are taking $ s=r \cos(\theta), t = r \sin(\theta)$) we have 

\begin{eqnarray} \label{polar_w}
H &= &-w^k_{rr}-\frac{w^k_r}{r}- \frac{w^k_{\theta \theta}}{r^2} -\frac{(N-2) w^k_r}{r} + \frac{w^k_\theta}{r^2} \left( (m-1) \tan(\theta) -(n-1) \cot(\theta) \right) \nonumber \\
&&+ \frac{(m-1)w^k}{r^2 \cos^2(\theta)}+ \frac{(n-1) w^k}{r^2 \sin^2(\theta)}  \quad \mbox{ in } (r,\theta) \in \widetilde{\Omega},
\end{eqnarray} 
and recall that $ g_1$ is increasing and $g_2$ is decreasing on $(0, \frac{\pi}{2})$, which is the assumption on the domain we are making.     Now recall we made the earlier claim that $ w^k \le 0$ on $ \pOm$.  Note that we can write $ w^k= sv_t^k- t v_s^k = v_\theta$ for $ (r,\theta) \in \widetilde{\Omega}$.   Using the fact that $v=0$ on $ \pOm$ and fact $ v \ge 0$ in $\Omega$ and the assumptions on $g_i$ we see that $ v_\theta \le 0$ on the boundaries of $ \partial \widetilde{\Omega}$ corresponding to $ r = g_1(\theta)$ and $ r=g_2(\theta)$.    So the idea now is that we would like to apply the maximum principle to either 
(\ref{w_eq_10}) or (\ref{polar_w}); if we could do this then it should imply that $ w^k \le 0$ and we would have that $ v^k \in K$.    One problem with applying the maximum principle is that there are multiple singularities in the equation which cause problems.    
Before we complete the proof we would like to write the equation for $ w^k$ in terms of $ x$.   A computation shows that $ w^k(x)$ satisfies  
\begin{equation} \label{w_x_var}
     -\Delta w^k(x) + \frac{(m-1) w^k}{x_1^2 + \cdot \cdot \cdot + x_m^2} + \frac{(n-1) w^k}{ x_{m+1}^2+ \cdot \cdot \cdot + x_N^2}  =H \quad \mbox{ in } \Omega \backslash \mathcal{N},
     \end{equation} where $\mathcal{N}:= \{s=0\} \cup \{t=0\}$.  Depending on the values of $m$ and $n$ one can probably show the equation is satisfied in the sense of distributions on the full $ \Omega$,  but this won't effect our approach.   To apply the maximum principle we will consider (\ref{w_eq_10}).   First note that $H \in L^2_{loc}( \overline{ \widehat{\Omega}} \backslash \mathcal{N})$ and hence we have $ w^k \in W^{2,2}_{loc}( \widehat{\Omega} \backslash \mathcal{N})$.  
     For $ \E>0$ small consider $ \psi(s,t):= (w^k(s,t)-\E)_+$ and note $ \psi=0$ near $ \partial \widehat{\Omega}$.   This is sufficient regularity to mulitply (\ref{w_eq_10}) by $ \psi$ and integrate (note since $ \psi=0$ near $ \mathcal{N}$ there are no issues with any of the possible singularities) to arrive at 
    \[\int_{\widehat{\Omega}} | \nabla_{s,t} (w^k-\E)_+|^2 d \mu(s,t)
     +   \int_{\widehat{\Omega}} \left( \frac{m-1}{s^2}+ \frac{ n-1}{t^2} \right) w^k (w^k-\E)_+ d \mu(s,t) = \int_{\widehat{\Omega}} H (w^k-\E)_+ d \mu(s,t)\le 0
     \]
     where $ d \mu(s,t)= s^{m-1} t^{n-1} ds dt$.  This is sufficient to show that $ (w^k-\E)_+=0$ and hence $ w^k \le \E$ on $ \widehat{\Omega}$ for all $ \E>0$ and hence $ w^k \le 0$ in $ \widehat{\Omega}$.     
     
     We now show that $v^k$ is bounded in $H_0^1(\Omega)$.  To see this test (\ref{linear_new_cut-off}) on $v^k$ to arrive at 
     \begin{eqnarray*}
     \int_\Omega | \nabla v^k(x)|^2 dx &=& \int_\Omega a u_k^{p-1} v^k dx\\
     & \le & C_a  \|u_k^{p-1} \|_{L^{p'}} \|v_k\|_{L^p}    \\
     &=&  \| u_k \|_{L^p}^{p-1} \|v^k \|_{L^p}  \\
     & \le & C \| \nabla u_k \|_{L^2}^{p-1} \| \nabla v^k\|_{L^2} 
     \end{eqnarray*} where we have used the improved imbedding in the last line and the fact that $u_k, v^k \in K$.   Using the fact that $ u \in K$ and hence $ \| \nabla u_k\|_{L^2}$ is bounded we see that $\{v^k\}_k$ is bounded in $H_0^1(\Omega)$ and hence after passing to a subsequence we have $ v^k \rightharpoonup v$ in $H_0^1(\Omega)$ for some $ v$.   Using standard arguments we can pass to the limit in  (\ref{linear_new_cut-off}) to see that $v \in H_0^1(\Omega)$ satisfies (\ref{linear_new_st}).   Also notes it clear that $v \in H^1_{0,G}(\Omega)$.  Also note that $ u_k^{p-1} \rightarrow u^{p-1} $ in $L^{p'}(\Omega)$ and hence by passing to another subsequence we have $ v^k \rightarrow v$ in $ W^{2,p'}(\Omega)$ and hence we can assume $ \nabla v^k \rightarrow \nabla v$ in $L^{p'}(\Omega)$ and a.e. in $ \Omega$.   We now set $ w:=sv_t-t v_s$.   Writing $ v^k_s$ and $ v_s$ as in (\ref{cab_form}) we see that $ (v^k)_s \rightarrow v_s$ a.e. in $ \Omega$ and hence we have the same for a.e. $ (s,t) \in \widehat{\Omega}$ and similarly for $ (v^k)_t \rightarrow v_t$ for a.e. $(s,t) \in \widehat{\Omega}$ and hence $ w^k \rightarrow w$ for a.e. $ (s,t) \in \widehat{\Omega}$ and hence $ w \le 0$ in $\widehat{\Omega}$. This gives the desired monotonicity of $v$ and hence $ v \in K$.  
     
     \end{proof}

\noindent 
\textbf{The completion of the proof of Theorem \ref{main}.} We shall first prove the existence of a weak solution for parts {\it 1-3}, and then we show that this solution has to be a classical one.\\

\noindent 
{\it 1.}  It follows from Lemma \ref{PS0} that $E_K$ has a critical point $\bar u \in K,$  with 
 $E_K(\bar u)= c>0$ where the critical value $c$ is characterized by
$$c= \inf_{\gamma\in \Gamma}\max_{t\in [0,1] }E_K[\gamma(t)],$$
where \[\Gamma= \{\gamma\in C([0,1], V) : \gamma(0)=0\neq \gamma(1), E_K(\gamma(1))\leq 0\}.\]
Since  $E_K(\bar u)>0$,  we have that $\bar u$ is non-zero. It also follows from
Proposition \ref{pointwise_double} that there exists $\bar v \in K$ satisfying 
 \[-\Delta \bar{v}= D\Phi(\bar{u})= a(x)|\bar{u}|^{p-2}\bar{u}.\]
 It now follows from Proposition \ref{prn} that $\bar u \in K$ is a solution of 
\begin{equation}\label{pscb}
\left\{\begin{array}{ll}
-\Delta u=  a(x)|u|^{p-2}u, &   x\in \Omega \\
u= 0, &    x\in  \partial \Omega.
\end {array}\right.
\end{equation} \\

\noindent
{\it 2.}  It follows from Theorem \ref{compact_no_mono} that $ H^1_{0,G}(\Omega) \subset \subset  L^p(\Omega)$  is compact for 
\[ 1 \le p < \min \left\{ \frac{2(n+1)}{n-1},  \frac{2(m+1)}{m-1} \right\}.\]  
It can be easily deduced that the functional 
$$E(u):= \frac{1}{2}\int_{\Omega} |\nabla u|^2 dx- \frac{1}{p}\int_{\Omega}  a(x)|u_+|^p dx,$$
satisfies the assumptions of the mountain pass theorem  on $H^1_{0,G}(\Omega),$  and therefore has a  non-negative  critical point on $H^1_{0,G}(\Omega)$.  It now follows from the principle of  Symmetric Criticality  that every critical point of $E$ is a solution of 
(\ref{pscb}).\\

\noindent
{\it 3.} The proof follows the same basic approach as the case of the annular domain with monotonicity.  The main difference is one has to deal with a different domain in the $ (\theta,r)$ coordinates.  The monotonicity of $g_i$ is needed for the same reason as the case of the annular domains.  Because we are not obtaining an increased range of $p$ for the monotonic case as compared to the nonmonotonic case we don't think its worth going into the exact details of the proof.  
\\

\noindent
{\it Regularity of the solution.}  We have shown the existence of a nonzero nonnegative solution $u \in K \subset H^1_{0,G}(\Omega)$ of (\ref{pscb}) in the proof of part {\it 1}. To complete the proof we need to show the solution is a classical solution and strictly positive.  First note that we can apply the Strong Maximum Principle to see the solution is positive after we have sufficient regularity.  Here we assume we are in the case of the annular domain with monotonicity and take $ 2 \le n \le m $ (if $n=1$ there is no iteration needed). Set $ t_0:=\frac{n+1}{n-1} - \frac{p-2}{2}$   and for $ k \ge 0$ set 
\[ t_{k+1} = \frac{(n+1) t_k}{n-1}- \frac{p-2}{2}.\] Note that $ t_0> \frac{(n-1)(p-2)}{4}$ and examining the cob-web shows that $ t_k$ increases to infinity.   Also note that $ t_0>1$. We now prove the following iteration step: suppose $ k \ge 0$ and 
\begin{equation} \label{iteration_assump}
\int_\Omega a(x)u^{p+2t_k-2} dx =C_k<\infty,
\end{equation}
then $u^{t_k} \in K$ and  
\[ \int_\Omega a(x) u^{p+2t_{k+1} -2} dx <\infty.\]  First recall that if $ 0 \le u \in H_0^1(\Omega)$ and $ -\Delta u \ge 0$  then $ u^{\gamma} \in H_0^1(\Omega)$ for all $ \frac{1}{2}<\gamma \le 1$. Since $ t_k \ge 1$ the only issue about $ u^{t_k} \in H_0^1(\Omega)$ is whether it grows too quick. For large $m$ we set 
 \begin{equation} 
\phi(x)=\left\{\begin{array}{ll}
u(x)^{2t_k-1}  &  \mbox{ if } u(x)<m, \\
m^{2t_k-1}     &   \mbox{ if }u(x) \ge m, \\
\end {array}\right.
\end{equation} and hence $ \phi \in K$ and is a suitable test function to put into the weak formulation of the pde.  This gives 
\[ (2t_k-1) \int_{\{u<m\} } u^{2t_k-2} | \nabla u|^2 dx = \int_{ \{u<m\}}a(x) u^{p+2t_k-2} dx + \E_m,\] where 
\[ \E_m=\int_{ \{ u>m\}} a(x)u^{p-1} m^{2t_k-1} dx.\]  Using the assumption (\ref{iteration_assump})  one sees that $\E_m \rightarrow 0$ as $ m \rightarrow \infty$ and hence we arrive at 
\[ (2t_k-1) \int_\Omega u^{2t_k-2} | \nabla u|^2 dx = \int_\Omega a(x) u^{p+2t_k-2} dx,\]  and hence $ u^{t_k} \in H_0^1(\Omega)$ since the integral on the left is equal to $ t_k^{-2} (2t_k-1) \| \nabla u^{t_k}\|_{L^2}^2$.  Its clear that in fact one has 
 $u^{t_k} \in K$. Rewriting the above and using the improved imbedding for functions in $K$ we arrive at 
\[ C \left( \int_\Omega u^\frac{t_k 2(n+1)}{n-1} dx \right)^\frac{n-1}{n+1} \le \int_\Omega | \nabla u^{t_k}|^2dx = \frac{t_k^2}{2t_k-1} \int_\Omega a(x) u^{p+2t_k-2} dx,\] where $C=C(\Omega)$. Now note that $ \frac{t_k 2(n+1)}{n-1}= p+2t_{k+1}-2$ and hence we have proved the iteration step since $ a(x)$ is bounded and can be inserted into the interal on the left.  To show the initial step test the equation on $u$ and use the improved imbedding to see \[ C \left( \int_\Omega u^\frac{2(n+1)}{n-1} dx \right)^\frac{n-1}{n+1} \le \int_\Omega a(x) u^p dx,\] and then note that $ t_0$ satisfies $ p+2t_0-2= \frac{2(n+1)}{n-1}$.   So using the above iteration we see that after a finite number of iterations we have that $ u^{p-1} \in L^q(\Omega)$ for some $ q>\frac{N}{2}$ and then using elliptic regularity and the classical Sobolev imbedding we have that $ u$ is H\"older continuous.  We can then switch to the Schauder regularity theory and proceed to show that $u$ is a classical solution.   As mentioned earlier, in the case of $n=1$ an iteration argument is not needed.  This follows after examing the imbedding of $K$ for $n=1$.   The other cases are similar and we omit their proofs.

\hfill $\square$

\section{Nonradial solutions when $\Omega$ is an annulus.}

In this section we  discuss the case when $a(x)=a(|x|)$ is radial,  and $\Omega$ is an annulus,  
 that is $\Omega=\{x:\,  R_1< |x|<R_2\}, $ 
 \begin{equation} \label{eqzvv}
\left\{\begin{array}{ll}
-\Delta u = a(|x|) u^{p-1} &  \mbox{ in } \Omega, \\
u>0    &   \mbox{ in } \Omega, \\
u= 0 &   \mbox{ on } \pOm.
\end {array}\right.
\end{equation}
 We shall prove that the solution obtained in Theorem \ref{main}  is nonradial when   radii $R_1, R_2$ satisfy  certain conditions.\\

We require some preliminaries before proving our theorems for the radial domain.  Consider the variational formulation of an eigenvalue problem given by 
\begin{equation} \label{var_mu}
    \mu_1=\inf_{\psi \in H^1_{loc}(0, \frac{\pi}{2})}\Big \{ \int_0^{\frac{\pi}{2}} |\psi'(\theta)|^2 \omega(\theta) \,d\theta; \quad    \int_0^{\frac{\pi}{2}} |\psi(\theta)|^2\omega(\theta) \,d\theta=1, \int_0^{\frac{\pi}{2}} \psi(\theta)\omega(\theta) \,d\theta=0     \Big\},
\end{equation} where $\omega(\theta):=\cos^{m-1}(\theta) \sin^{n-1}(\theta)$ and suppose $ \psi_1$ satisfies the minimization problem.  Then $ (\mu_1,\psi_1)$ satisfies 
\begin{equation} \label{eigen_p}
\left\{\begin{array}{ll}
 -\partial_\theta (\omega(\theta) \psi_1'(\theta) )= \mu_1 \omega(\theta)\psi_1(\theta)&  \mbox{ in } (0, \frac{\pi}{2}), \\
\psi'(\theta)>0   &   \mbox{ in } (0, \frac{\pi}{2}), \\
\psi_1'(0)= \psi_1'(\frac{\pi}{2})=0, & 
\end {array}\right.
\end{equation}  
and note $(\mu_1,\psi_1)$ is the second eigenpair, the first eigenpair is given by $ (\mu_0,\psi_0)=(0,1)$.  Note in this problem one can find an explicit solution given by 
   \[ \mu_1=2N, \quad \psi_1(\theta)= \frac{m-n}{N} - \cos(2 \theta),\] and note we can apply  Sturm–Liouville theory and count the number of zeros of $ \psi_1$ to see it is in fact the second pair.

We are now  ready to prove the existence of the non-radial solutions, but before we do, we make some comments regarding the proof.  Our proof does not use the fact that $ \psi_1$ satisfies (\ref{eigen_p}) but rather it utilizes the fact that $\psi_1$ is where the minimum is attained (of course in this case they are equivalent).  If $ \mathcal{B}_1$ is the constraint set from the definition of $ \mu_1$ in  (\ref{var_mu}) set $ \mathcal{B}_1^+:=\{ \psi \in \mathcal{B}_1: \psi' \ge 0 \mbox{ a.e. in } (0,\frac{\pi}{2}) \}$ and let $ \mu_1^+$ denote the associated quantity if we were to minimize over $\mathcal{B}_1^+$.  The same proof would carry over if we replaced  $ (\mu_1,\psi_1)$ with $ (\mu_1^+, \psi_1^+)$ where $ \psi_1^+$ is the associated minimizer which a  straightforward argument show the existence of.  Note apriori the pointwise constraint on the gradient can  significantly complicate finding  associated  Euler-Lagrange equation and so not needing to use the equation can be a benefit. \\

\noindent
\textbf{Proof of Theorem \ref{nnr}.} Let us assume  the solution  $u$ of (\ref{eqzvv}) obtained in Theorem \ref{main}  is radial. Recall  that  $E_K(u)= c>0$ where the critical value $c$ is characterized by
$$c= \inf_{\gamma\in \Gamma}\max_{\tau\in [0,1] }E_K[\gamma(\tau)],$$
where \[\Gamma= \{\gamma\in C([0,1], V) : \gamma(0)=0\neq \gamma(1), E_K(\gamma(1))\leq 0\}.\]
For the sake of simplifying the notations, we use $E$ instead of $E_K$ in the rest of the proof.
Set $v(r,\theta)=u(r)\psi(\theta)$ where $\psi(\theta)=\frac{m-n}{N} - \cos(2 \theta)$ being a solution of (\ref{eigen_p}) with $ \mu_1=2N. $ We first show that
\begin{equation}\label{qw}
 \int_\Omega |\nabla v|^2 \, dx-(p-1)\int_\Omega|a(|x|)u|^{p-2}v^2\,dx <0.
\end{equation}
To this end we need to show that $M(u,v)<0$ where
\begin{equation}\label{qw00}
  M(u,v):=\int_{\widehat\Omega} s^{m-1}t^{n-1}(v_t^2+v_s^2) \, ds\, dt-(p-1)\int_{\widehat\Omega}s^{m-1}t^{n-1} a(s,t)u^{p-2}v^2\,ds\,dt <0.
\end{equation}
Note first that it follows from the equation $-\Delta u=a(r)u^{p-1}$ that 
\begin{eqnarray}\label{qw1}
\int_{R_1}^{R_2} u_r^2 r^{n-1}\, dr=\int_{R_1}^{R_2}a(r)u^p r^{n-1}\, dr.
\end{eqnarray}
It also from the definition of $\lambda_1$,  the best constant in Hardy inequality, that 
\begin{equation}\label{hardyy}\lambda_1 \int_{R_1}^{R_2} \frac{u^2}{r^2}r^{N-1}\, dr\leq \int_{R_1}^{R_2} u_r^2 r^{N-1} \, dr. \end{equation}
It follows from (\ref{qw1}) by writing $M(u,v)$ in polar coordinates that  
\begin{eqnarray*}
M(u,v)&=&\int_{R_1}^{R_2} \int_0^{\frac{\pi}{2}}\Big(\psi^2u_r^2+\frac{u^2\psi'^2}{r^2}-(p-1)a(r)u^{p}\psi^2\Big)r^{N-1}\omega(\theta) \,d\theta\, dr\\
 &=& \int_{R_1}^{R_2} \int_0^{\frac{\pi}{2}}\frac{u^2\psi'^2}{r^2}r^{N-1}\omega(\theta)  \,d\theta\, dr-(p-2)\int_{R_1}^{R_2} \int_0^{\frac{\pi}{2}}\psi^2u_r^2 r^{N-1}\omega(\theta) \,d\theta\, dr,
 \end{eqnarray*}
 where $\omega(\theta)=\cos^{m-1}(\theta) \sin^{n-1}(\theta)$.
 This together with the definition of $\mu_1=2N$ in (\ref{var_mu}) and the inequality  (\ref{hardyy}) imply that 
\begin{eqnarray*}
M(u,v)
 &=& 2N\int_{R_1}^{R_2} \int_0^{\frac{\pi}{2}}\frac{u^2\psi^2}{r^2}r^{N-1}\omega(\theta) \,d\theta\, dr-(p-2)\int_{R_1}^{R_2} \int_0^{\frac{\pi}{2}}\psi^2u_r^2 r^{N-1}\omega(\theta) \,d\theta\, dr\\
 &=& \int_0^{\frac{\pi}{2}} |\psi(\theta)|^2\omega(\theta) \,d\theta \Big (   2N\int_{R_1}^{R_2} \frac{u^2}{r^2}r^{N-1}\, dr-(p-2)\int_{R_1}^{R_2} u_r^2 r^{N-1}\, dr\Big)\\
 &\leq & \int_0^{\frac{\pi}{2}} |\psi(\theta)|^2\omega(\theta) \,d\theta  \Big (  \frac{2N}{\lambda_1}\int_{R_1}^{R_2} u^2_r r^{N-1}\, dr-(p-2)\int_{R_1}^{R_2} u_r^2 r^{N-1} \, dr \Big)\\
 &=&\int_0^{\frac{\pi}{2}} |\psi(\theta)|^2\omega(\theta) \,d\theta \int_{R_1}^{R_2} u_r^2 r^{N-1} \, dr\Big(\frac{2N}{\lambda_1}-(p-2)\Big)<0.
\end{eqnarray*}

 Set $\gamma_\sigma (\tau)= \tau({u}+ \sigma v)l$, where $l>0$ is chosen in such a way that $E\big( ({u}+ \sigma v)l\big)\leq 0$ for all $|\sigma|\leq 1$. Note that $\gamma_{\sigma}\in \Gamma$. We shall show that there exists $\sigma>0$ such that for every $\tau\in [0,1]$ one has  $E(\gamma_\sigma(\tau))< E({u})$, and  therefore, 
\[c\leq \max_{\tau\in[0,1]} E(\gamma_\sigma(\tau))< E({u}),\]
which leads to a contradiction since $E(u)=c.$
Note first that there exists a unique  smooth real  function $g$ on a small neighbourhood of zero with $g'(0)=0$ and $g(0)=1/l$ such that 
$\max_{\tau\in[0,1]} E(\gamma_\sigma(\tau))=E\big (g(\sigma)({u}+ \sigma v)l\big).$
We now define $h: \R \to\R$ by $$h(\sigma)=E\big (g(\sigma)({u}+ \sigma v)l\big)-E(u).$$ Clearly we have $h(0)=0$.  Note also that $h'(0)=0$ due to the facts that $E'(u)=0$ and $\int \psi \omega(\theta) \, d\theta=0. $ Finally 
 $h''(0)<0$ due to (\ref{qw}).  This in fact show that 
 \[\max_{\tau\in[0,1]} E(\gamma_\sigma(\tau))=E\big (g(\sigma)({u}+ \sigma v)l\big)<E(u),\]
for small $\sigma>0$  as desired.
\hfill $\Box$ \\

Recall from (\ref{cK}) that 
\begin{equation*}K=K(m,n)=\left\{ 0 \le u \in H_{0,G}^1(\Omega):  su_t-tu_s \le 0 \mbox{ a.e. in  } \widehat{\Omega} \right\}, 
\end{equation*}
which corresponds to the decomposition $\R^{m}\times \R^{n}$ of the annulus $\Omega=\{x:\,  R_1< |x|<R_2\}$  in 
$ \R^N$ with $N=m+n.$  We have the following result regarding the distiction of solutions for different decompositions of $ \R^N$.

\begin{lemma}\label{mm'}
Let $1\leq n <n'\leq \lfloor \frac{N}{2}\rfloor$ and set $m=N-n$, $m'=N-n'.$   Let $u_{m,n} \in K(m,n)$ and $u_{m',n'}\in K(m',n')$ be the  solutions obtained in Theorem \ref{main} corresponding to the decomposition $\R^{m}\times \R^{n}$ and  $\R^{m'}\times \R^{n'}$ of $\R^N$ respectively. Then $u_{m,n} \not = u_{m',n'}$ unless they are both radial functions. 
\end{lemma}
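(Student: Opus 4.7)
My plan is to argue by contradiction: assume $u := u_{m,n} = u_{m',n'}$ is the common function and show it must then be radial. Because $u_{m,n}\in K(m,n)\subset H^1_{0,G}(\Omega)$ it is invariant under $G:=O(m)\times O(n)$, and likewise $u$ is invariant under $G':=O(m')\times O(n')$, where both groups act on $\R^N$ in the usual block fashion. The lemma therefore reduces to the purely group-theoretic claim that the subgroup $H\subset O(N)$ generated by $G\cup G'$ acts transitively on every sphere $\{|x|=r\}\subset \R^N$, for then joint invariance forces $u$ to depend only on $|x|$.

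To establish transitivity I will send an arbitrary $x\in\Omega$ with $|x|=r$ to the canonical point $r\,e_1$ using a three-step chain of rotations. First, a single element of $G=O(m)\times O(n)$ rotates $x$ to $P_0 := s\,e_1 + t\,e_{m+1}$, where $s=(x_1^2+\cdots+x_m^2)^{1/2}$ and $t=(x_{m+1}^2+\cdots+x_N^2)^{1/2}$, so that $s^2+t^2=r^2$. Second, because $n<n'$ forces $m>m'$, both indices $m+1$ and $m'+1$ lie in the second block $\{m'+1,\dots,N\}$ of the $(m',n')$-decomposition; an element of the $O(n')$-factor of $G'$ that swaps these two coordinates therefore sends $P_0$ to $P_1 := s\,e_1+t\,e_{m'+1}$. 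Third, both nonzero coordinates of $P_1$ now lie in the first block $\{1,\dots,m\}$ of the $(m,n)$-decomposition (since $m'+1\le m$), so a single element of the $O(m)$-factor of $G$ sends $P_1$ to $r\,e_1$.

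Concatenating the three rotations produces an element of $H$ carrying $x$ to $r\,e_1$, which is exactly transitivity of $H$ on the sphere $\{|x|=r\}$. Since $u$ is jointly $G$- and $G'$-invariant, we conclude $u(x)=u(r\,e_1)$ for every $x$ with $|x|=r$, so $u$ is radial on $\Omega$. Hence the equality $u_{m,n}=u_{m',n'}$ forces both functions to be the same radial function, which is the statement of the lemma.

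The only substantive step is the combinatorial middle one: the availability of a transfer index strictly between the two decompositions, namely $m'+1\le m$, equivalent to $m>m'$, which comes directly from the hypothesis $n<n'$. Everything else is routine because the annulus $\Omega=\{R_1<|x|<R_2\}$ is $O(N)$-invariant, so no rotation leaves $\Omega$ and no boundary or measurability issues intervene; the continuity of the classical solutions produced by Theorem \ref{main} lets us compare pointwise values without ambiguity.
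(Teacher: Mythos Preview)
Your proof is correct. The paper argues differently: it assumes $u_{m,n}=u_{m',n'}=u$, writes $u(x)=f(s,t)=g(s',t')$, and then evaluates $u$ at points with only two nonzero coordinates, one in the overlap region $\{m'+1,\dots,m\}$ and one outside it, so that one decomposition sees the two coordinates in the same block while the other separates them; this forces $f$ (respectively $g$) to be a radial function of its two arguments. Your route is more structural: rather than restricting to a two-dimensional slice, you show directly that the group generated by $O(m)\times O(n)$ and $O(m')\times O(n')$ acts transitively on spheres, via the explicit three-step chain $x\mapsto P_0\mapsto P_1\mapsto r e_1$. Both arguments hinge on exactly the same combinatorial point, namely that $n<n'$ gives $m'<m$ and hence a ``transfer'' index $m'+1\le m$; the paper exploits this by choosing coordinates, you exploit it by composing rotations. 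Your version has the advantage of being coordinate-free and of making the transitivity conclusion immediate, while the paper's argument is slightly shorter once the right slice is written down (though its index bookkeeping is a bit loose).
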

\begin{proof}
Let $u_{m,n} =u_{m',n'}=u.$  We shall show that $u$ must be radial.  It follows that $u(x)=f(s,t)=g(s',t')$ for two  functions $f$ and $g$ where 
\[ s^2:= { x_1^2 + \cdot \cdot \cdot  + x_{m}^2}, \qquad t^2:={x_{m+1}^2 + \cdot \cdot \cdot + x_N^2 },  \]
and
\[ {s'}^2:= { x_1^2 + \cdot \cdot \cdot  + x_{m'}^2}, \qquad {t'}^2:={x_{m'+1}^2 + \cdot \cdot \cdot + x_N^2 }.  \]

By assuming  $x_i=0$  for $i\not = x_1, x_{m'}$  we obtain that
\[f(|x_1|, |x_{m'}|)=g(\sqrt{x_1^2+x_{m'}^2}, 0),\]
from which we obtain that $f$ must be a radial function.  Similarly, one can show that $g$ is a radial function.
\end{proof}

\noindent
\textbf{Proof of Theorem \ref{multiplicity_100}.}
  It follows from Theorem \ref{main}   that for each $n\leq k$ and $m=N-n$ equation (\ref{eqz}) has a solution of the form $u_{m,n}=u_{m,n}(s,t)$ where
\[ s^2:= { x_1^2 + \cdot \cdot \cdot  + x_{m}^2}, \qquad t^2:={x_{m+1}^2 + \cdot \cdot \cdot + x_N^2 }, \]
provided $p< \frac{2n+2}{n-1}.$  Since $n\leq k$ we must have that $$\frac{2 k+2}{k-1}\leq \frac{2n+2}{n-1}.$$
Also by Theorem \ref{nnr} the solution $u_{m,n}$ is non-radial provided  \[p-2>\frac{2N}{ \lambda_1}.\]
Thus,  for each $n \in \{1,...,k\}$ we have a non-radial solution $u_{m,n}.$
On the other hand by Lemma \ref{mm'} we have that Then $u_{m,n} \not = u_{m',n'}$ for all $n\not =n'.$  This indeed implies that we have $k$ positive non-radial solutions.
\hfill $\Box$ \\

\noindent
\textbf{Proof of  Corollary \ref{annul_resu}.} 

\noindent
1. By assuming $k=1$ in Theorem \ref{multiplicity_100}, provided $ \lambda_1>0$ we obtain 1,  but this follows from the classical Hardy inequality. \\

\noindent
2.   By assuming $k=\Bigl\lfloor\frac{N}{2} \Bigr\rfloor$ in Theorem \ref{multiplicity_100} we obtain that there are $\Bigl\lfloor\frac{N}{2} \Bigr\rfloor$ positive non-radial solutions provided that 
$$2+\frac{2N}{\lambda_1}<p<\frac{2\Bigl\lfloor\frac{N}{2} \Bigr\rfloor+2}{\Bigl\lfloor\frac{N}{2} \Bigr\rfloor-1}.$$
We shall now show that $\lambda_1$ can be sufficiently large under 2-a and 2-b and therefore we have $\Bigl\lfloor\frac{N}{2} \Bigr\rfloor$ positive non-radial solutions provided 
$$2<p<\frac{2\Bigl\lfloor\frac{N}{2} \Bigr\rfloor+2}{\Bigl\lfloor\frac{N}{2} \Bigr\rfloor-1}.$$
2-a. Set $ \Omega_R:=\{ x \in \IR^N: R<|x|<R+1\}$.   Set \[\lambda_R=\inf_{0 \neq w \in H^1_0(\Omega_R)}\frac{\int_{\Omega_R} |\nabla w|^2 \,dx}{\int_{\Omega_R} \frac{| w|^2 }{|x|^2}\,dx}=: \inf_{0 \neq w \in H_0^1(\Omega_R)} H(w),\] where $\Omega_R:=\{x \in \IR^N:  R<|x|<R+1 \}$.   We will show that $ \frac{\lambda_R}{R^2} \rightarrow \pi^2$ as $ R \rightarrow \infty$ which then shows $ p-2> \frac{2N}{\lambda_R}$ for $R$ large.  Note there is no loss of compactness and so $ \lambda_R$ is attained at some $ w_R$ and note that we can assume $ w_R>0$ and that $ w_R$ is radial, which we explain in a moment.  Note that $ w_R>0$ solves 
\begin{equation} \label{eigen_R}
\left\{\begin{array}{ll}
-\Delta w_R(x) = \frac{\lambda_R w_R(x)}{|x|^2} &  \mbox{ in } \Omega_R, \\
w_R= 0 &   \mbox{ on } \pOm_R.
\end {array}\right.
\end{equation} So note that $\lambda_R$ is the first eigenvalue of this weighted problem and hence if $w_R>0$ was not radial we could use the rotational invariance of the equation to see the dimension of the first eigenspace was larger than one which would be a contradiction and hence $ w_R$ is radial.   Set $ v_R(r)=w_R(R+r)$ and hence 
\begin{equation} \label{eigen_v}
-v_R''(r)-\frac{(N-1)}{R+r} v_R'(r)= \frac{\lambda_R}{(R+r)^2} v_R(r) \mbox{ in } 0<r<R, \qquad v_R(0)=v_R(1)=0.
\end{equation} After normalizing we can assume $ \max_{[0,1]} v_R=1$ and one can show that $ v_R'(r)$ is bounded in $ R$.  We now want to show that $ \lambda_R$ grows at most like $ R^2$.  To see this fix some $ \psi \in C_c^\infty(B_\frac{1}{4})$ and let $ |x_R|=R+\frac{1}{2}$ then note that $ \lambda_R \le H( \psi( \cdot - x_R))$.  Writing out the details gives the desired bound.  Now let $ R_m \rightarrow \infty$ and set $ v_m(r)=v_{R_m}(r)$.  By passing to a subsequence we can assume that $ \frac{\lambda_{R_m}}{R_m^2} \rightarrow \tilde{\lambda}$. We can pass to the limit in (\ref{eigen_v}) to see there is some $v \ge 0$ such that $v_m \rightarrow v$ in $C^{0,\delta}[0,1]$ and hence $v$ satisfies $-v''(r)= \tilde{\lambda}v(r)$ in $ (0,1)$ with $ v(0)=v(1)=0$ and $ \sup_{(0,1)}v=1$.  Then we must have $ v(r)= \sin( \pi r)$ and $ \tilde{\lambda}= \pi^2$ and hence $ \frac{\lambda_{R_m}}{R_m^2} \rightarrow \pi^2$. \\

\noindent
2-b.  Let $\Omega_R:=\{x \in \IR^N: R<|x|<\gamma(R) \}$ and let $ w_R=w_R(r)>0$ be the minimizer and hence $ w_R$ satisfies
\[ -\Delta w_R(r) = \frac{ \lambda_R w_R(r)}{r^2} \quad  \mbox{ in } R<r<\gamma(R),\] with $ w_R(R)=w_R(\gamma(R))=0$.  Set $v_R(r)=w_R( R+(\gamma(R) -R)r)$ for $ r \in (0,1)$ and note $ v_R(0)=v_R(1)=0$.  Then 
\begin{equation} \label{exp_ann}
-v_R''(r) - (N-1) C_R(r) v_R'(r)= D_R(r) v_R(r) \quad \mbox{ in } 0<r<1,
\end{equation} where 
\[ C_R(r):= \frac{\gamma(R)-R}{R+(\gamma(R)-R) r}, \; \; D_R(r)= \frac{ \lambda_R ( \gamma(R)-R)^2}{( R+ (\gamma(R)-R)r)^2}.\]  Note that $ 0 \le C_R(r) \le C_R(0) \rightarrow 0$ as $ R \rightarrow \infty$ after considering the assumptions on $ \gamma(R)$.  Also we have
\[ 0 \le D_R(r) \le D_R(0)= \frac{ \lambda_R ( \gamma(R)-R)^2}{R^2}.\]  Let $R_m \rightarrow \infty$ and by passing to a subsequence we can assume that either $ D_{R_m}(0) \rightarrow \infty$ or its bounded.  In the first case we have that $ \lambda_{R_m} \rightarrow \infty$ which is what we hoped to prove.  So now we can assume that $D_{R_m}(0)$ is bounded and that we can write $ D_R(r)= D_R(0) T_R(r)$ where $T_R \rightarrow 1$ uniformly on $[0,1]$.  By normalizing $ v_{m}=v_{R_m}$ we can assume $ \max_{[0,1]}v_m=1$ and we can pass to the limit in (\ref{exp_ann}) to see that $ v_m \rightarrow v$ in $ C^{0,\delta}[0,1]$ and $ D_{R_m}(r) \rightarrow \tilde{\lambda}$ uniformly in $[0,1]$ and so $ v$ solves $ -v''(r) = \tilde{\lambda} v(r)$ in $ 0<r<1$ with $ v(0)=v(1)=0$ and $ v \ge 0$ with $ \max_{[0,1]}v=1$ and hence $ v(r)=\sin( \pi r)$ and $ \tilde{\lambda}= \pi^2$. In any case from this we see that $ \lambda_R \rightarrow \infty$ and again we are done. 
\hfill $\Box$

\end{document}